\definecolor{BLUE}{RGB}{41,86,143}
\definecolor{RED}{RGB}{178,31,53}
\DeclareSymbolFontAlphabet{\amsmathbb}{AMSb}
\newtheorem{teo}{Theorem}
\newtheorem{lemma}{Lemma}
\newtheorem{coro}{Corollary}
\newtheorem{propo}{Proposition}
\theoremstyle{definition}
\newtheorem*{defi}{Definition}
\newtheorem{remark}{Remark}
\newtheorem{example}{Example}
\newcommand{\bb}[1]{\mathbb{#1}}
\newcommand{\E}{\ensuremath{ \bb{E} } }
\newcommand{\bo}[1]{\ensuremath{{\bf #1 } }}
\title[Fast Simulation of Conditioned MBGW]{\Large Fast Simulation of Size-Constrained Multitype Bienaym\'e-Galton-Watson Forests and Applications}
\author{Osvaldo Angtuncio Hern\'andez}
\address[OAH]{Centro de Investigaci\'on en Matem\'aticas\\ Guanajuato,  
M\'exico}
\thanks{Research supported partially by CoNaCyT grant FC-2016-1946,  UNAM-DGAPA-PAPIIT grant no. IN114720 and the Deutsche Forschungsgemeinschaft (through grant DFG-SPP-2265)} 
\email{osvaldo.angtuncio@cimat.mx}
\subjclass[2010]{60C05, 60F99 , 60G09  , 60G50  , 05C05,60-08}
\keywords{Multitype Bienaym\'e-Galton-Watson, Forests with a Given Degree Sequence, Multidimensional First Hitting Time, 
	Vervaat Transform, 
Simulation of Random Forests, Exchangeable Increment Processes}
\newcommand{\re}{\ensuremath{\mathbb{R}}}
\newcommand{\paren}[1]{\ensuremath{\left( #1\right) }}
\newcommand{\p}{\mathbb{P}}
\newenvironment{esn}{\begin{equation*}}{\end{equation*}}
\newcommand{\se}{\ensuremath{\mathbb{E}}}
\newcommand{\bra}[1]{\ensuremath{\left[ #1\right] }}
\newcommand{\z}{\ensuremath{\mathbb{Z}}}
\newcommand{\na}{\ensuremath{\mathbb{N}}}
\newcommand{\indi}[1]{\si\left\{ #1\right\}}
\newcommand{\si}{{\ensuremath{\bf{1}}}}
\newcommand{\esp}[1]{\ensuremath{\se\! \left( #1 \right)}}
\newcommand{\proba}[1]{\ensuremath{\sip\! \left( #1 \right)}}
\newcommand{\sip}{\mathbb{P}}
\newcommand{\floor}[1]{\ensuremath{\lfloor #1\rfloor}}
\def\BState{\State\hskip-\ALG@thistlm}
\begin{document}
\maketitle

\vspace{-.5cm}

\vspace{.1cm}
\begin{center}
		\emph{Centro de Investigaci\'on en Matem\'aticas,}\\ \emph{Guanajuato, M\'exico}\\
\end{center}

\vspace{.3cm}

\begin{abstract}
The degree sequence $(n_{i,j}(k), 1\leq i,j\leq d, k\geq 0)$ of a multitype forest with $d$ types encodes the number of individuals of type $i$ with $k$ children of type $j$.
In this paper, we introduce a simple algorithm to sample a multitype forest uniformly from the set of all forests with a given degree sequence (MFGDS).
This generalizes the single-type construction of Broutin and Marckert (2014).
To achieve this, we extend the Vervaat transform (1979) to multidimensional discrete exchangeable increment processes.

We demonstrate that MFGDS extend multitype Bienaym\'e--Galton--Watson (MBGW) forests.
Specifically, mixing MFGDS laws recovers MBGW forests conditioned on a fixed size for each type (CMBGW).
Under general assumptions, we derive the law of the total population by types in an MBGW forest and relate it to a multidimensional first-hitting time.
This result, which is of independent interest, generalizes the Otter--Dwass (1949,1969) and Kemperman (1950) formulas.

By combining this relation with our MFGDS construction, we provide an efficient algorithm to simulate CMBGW forests, generalizing the work of Devroye (2012).
When the variance is finite, the expected simulation time outperforms standard na\"ive methods.
For the proof we derive a generalized local limit theorem for multidimensional first-hitting times.
Finally, we apply our results to enumerate plane, labeled, and binary multitype forests with fixed sizes, generalizing results of Pitman (1998).
\end{abstract}

\tableofcontents

\section{Introduction}

\emph{Bienaym\'e-Galton-Watson forests} (BGW forests) are a simplified model for the genealogy of populations, where individuals have the same reproduction law. 
A natural generalization of such a model are the \emph{multitype Bienaym\'e-Galton-Watson forests} (MBGW forests), used when several types of individuals coexist (leading to different reproduction laws). 
Such MBGW forests have applications in biology, demography, 
genetics, medicine,
epidemics, 
language theory, and several other sciences
(see \cite{MR0163361,MR0286196,MR0279901,MR0488341,MR0392022,Ciampi86,MR1601681,MR2560499,articleBranchingProcessesTheirRoleInEpidemiology,MR3310028,MR3941869,MR3992183}). 
Those models have applications also for pure mathematics.
Roughly speaking, Miermont \cite{MR2469338} has proved that under certain conditions (in particular, a finite variance condition), when \emph{ignoring the types}, large MBGW forests are similar to large (unitype) BGW forests. 
The latter result was extended by \cite{MR3606739} to an infinite number of types, and by \cite{MR3748121} to forests with offspring distribution in the domain of attraction of a stable law.

\emph{Conditioned random forests} also provide us with applications in different branches of science. 
In biology, they can be useful to model mutations in mitochondrial DNA  \cite{MR1934415}. 
An application to epidemiological risk analysis is given in \cite{Penisson2010}, and in \cite{MR3363681}, in the multitype case.
In probability, unitype conditioned random forests can be used to describe \emph{Real Trees}, to enumerate \emph{Classes of Combinatorial Forests}, and to study \emph{Random Graphs}. 
Aldous' theory of the  \emph{Continuum Random Tree} (CRT),  started proving that the CRT is the scaling limit of (unitype) finite variance BGW trees \emph{conditioned on their total population size} \cite{MR1085326,MR1166406}. Together with the works of Le Gall and Le Jan \cite{MR1617047}, and Duquesne and Le Gall \cite{MR1954248}, those  authors opened the path to the study of random \emph{real trees}.
Conditioned random forests also
coincide with some combinatorial models, providing us with \emph{enumerations of combinatorial classes of forests}, for example, Cayley, plane, $k$-ary or binary forest (see \cite{MR1630413}).
Not only conditioning with the size of the random forest, but also with the number of leaves, the height, the width, the number of individuals having fixed children, or the time of extinction, has been proved useful (see \cite{MR1166406,MR1630413,MR1678582,MR2888318,MR2908619,MR2946438,MR3188597,2020arXiv200812242A}).
Another important application of conditioned BGW trees, is their appearance in the study of local windows of the \emph{Erd\H{o}s-R\'enyi random graph} \cite{MR0125031}, which allows us to approximate  characteristics like the size of the \emph{giant component} in the random graph (see Chapter 4.2 in \cite{MR3617364}).
A model that goes beyond the Erd\H{o}s-R\'enyi random graph, is the \emph{configuration model}, introduced by \cite{MR505796,MR595929}. 
The interest in such random graphs lies on their matching with observations of large \emph{real-world networks}.
With no doubt, studying 
\emph{random trees with a prescribed degree sequence}  helps to understand invariance principles of the configuration model (see also the discussion in \cite{MR3188597}).

The literature extending the previous paragraph to conditioned multitype random forest is less diverse. 
When proving their \emph{local} convergence towards a limiting object, such as the multitype generalization of Kesten's infinite tree, some results are known \cite{MR653216,MR3803914,MR3769811}.
But for the \emph{global} convergence this is not the case.
In the same work of Miermont \cite{MR2469338}, the author proves (again ignoring the types) that MBGW forests conditioned to have a large number of individuals of a \emph{fixed} type, behave as the CRT. 
The latter was generalized to the stable case in \cite{MR3748121}. 
Some other conditionings in the literature are that a linear combination of the sizes by types is fixed, that the size by types approaches a certain eigenvector, or that the forests goes extinct in a large time (see \cite{MR2383922,MR2870524,MR3476213,MR3803914,MR3769811,MR3963289,2019arXiv191207296H}). 
Naturally, one is tempted to expect a convergence of CMBGW forests to a \emph{Continuum Multitype Random forest}. 
The author and David Clancy \cite{HernandezClancy2025multitypelevytrees} have recently shown that a large class of MBGW trees conditioned in some sense to be large, converge to a multitype L\'evy tree.
The difference with other results, is that the limiting object retains a multitype structure.

In this paper, we work with MBGW forests with $r_i\geq 0$ roots type $i$, conditioned to have a total number $n_i\geq r_i$ of individuals type $i$, for $i\in[d]:=\{1,\ldots, d\}$, that is, conditioned with the total number of individuals by types (CMBGW($\bo n,\bo r$) forests). 
Our main result in this paper is to generalize the construction in \cite{MR2888318}, where Devroye simulates a (unitype) BGW tree conditioned to have size $n$. 
We provide an exact sampling algorithm that produces CMBGW($\bo n,\bo r$) forests. 
Our algorithm is simple to implement, and under mild conditions, it performs faster than an usual na\"ive method. 

Devroye's results relies on the Otter-Dwass formula \cite{MR0030716,MR0253433}.
The latter says that the law of the total number of individuals in a BGW forest $\tau_k$ with $k$ trees, having offspring distribution $\mu$ is given by
\begin{equation*}
	\proba{\# \tau_k=n}=\frac{k}{n}\proba{X_n=n-k}\qquad n\geq k,
\end{equation*}
where $X$ is a random walk with step law $\mu$. 
It is also well-known that the size of a forest has the same law as the first hitting time to 0 of a random walk started at the number of trees, which is Kemperman's formula (see \cite{MR0038045,MR0138139}). 
Understanding the total progeny of a forest is interesting on its own (see \cite{MR1176953,MR1628630}, for a multitype model where the total number of individuals is studied).
One reason for this, is that it gives us a nice connection between probability and combinatorics. 
Using the law of $\# \tau_k$, one can deduce the total number of plane, labeled and binary forests having $k$ trees and $n$ vertices, see \cite{MR1630413}. 
An example in the unitype case is the following. 
Let $\mathcal{F}^{plane}_{n,k}$ be a uniformly distributed forest from the set of plane forests having $k$ trees and $n$ individuals;
let $\mathcal{G}_{k,p}$ be a BGW forest with $k$ trees and Geometric($p$) offspring distribution, with $p\in (0,1)$. 
Then 
\begin{equation*}
	\mathcal{F}^{plane}_{n,k}\stackrel{d}{=}\paren{\mathcal{G}_{k,p}\,|\,\#\mathcal{G}_{k,p}=n }\ \ \ \ \ \ \mbox{and}\ \ \ \ \ \ \proba{\mathcal{G}_{k,p}=\bo f\,|\,\#\mathcal{G}_{k,p}=n }=\frac{1}{\frac{k}{n}{2n-k-1\choose n-k}},
\end{equation*}for every plane forest $\bo f$ with $k$ trees and $n$ individuals. 
Similar equalities in distribution are available for the Poisson and the Bernoulli distribution.

Our second main result is to generalize the above Otter-Dwass formula (and also Kemperman's formula) and those connections between combinatorics and probability about enumerations of forests and lattice paths, to the multitype setting.
The arguments employed in our proof are sufficiently robust to extend to the continuous-state setting. 
Indeed, an application of our generalization of the Otter-Dwass formula, is given in a separate paper \cite{AngtuncioPalauConvergenceMBGWProcess2026}. 
In such a work, together with Sandra Palau, we prove the convergence of the CMBGW process towards a continuum multitype limit, which can be thought as the \emph{multitype continuous state branching process} (MCSBP) \emph{conditioned to have fixed mass by types}.
The latter is a generalization to the multitype setting, of Conjecture 4 of Aldous \cite{MR1166406}, and of the convergence of the branching process given in \cite{MR2534486}. 
Note that an (unconditioned) MCSBP can be constructed using the results in \cite{MR3689968}. 
Also, a clear relation between the sizes of each type in the components of multitype random graphs (a.k.a. inhomogeneous random graphs) and MBGW trees is still missing (some connections can be found in Chapter 3.4-3.5 of \cite{Hofstad2022}, and Lemma 4.7 in \cite{MR4586227}). 
We believe our construction can be useful to understand the latter.

Using the generalization of the Otter-Dwass formula, we give an algorithm to simulate CMBGW forests.
The latter generalizes  Devroye's algorithm \cite{MR2888318} for single-type trees. 
Let us state Devroye's result, so the reader can understand the main steps to generalize it.
Instead of working directly with forests, we use a well-know bijection with \emph{excursions}. 
Given a unitype plane tree $\bo t$, label the vertices in a \emph{breadth-first order} (BFO), that is, starting with the root having label one and being at generation zero, label its children (constituting generation one) from left to right, label its grandchildren (constituting generation two) from left to right, and so on. 
We define the 
\emph{breadth-first walk} (BFW) as a path $W$ starting at one, and having increments $\kappa_{\bo t}(i)-1$, where $\kappa_{\bo t}(i)$ is the number of children of individual $i$ in breadth-first order. 
It is well-known that this coding of trees using the BFW (also called an \emph{excursion}) is indeed a bijection \cite{MR2245368}[Lemma 6.3, Chapter 6]. 

Devroye's algorithm to simulate a BGW tree of size $n$ and offspring distribution $\mu=(\mu_i,i\geq 0)$, and prove its efficiency, can be decomposed in five main steps:

\begin{enumerate}
	\item generate a multinomial vector $\bo{S}=(N_0,N_1,\ldots)$ with parameters $(n;\mu_0,\mu_1,\ldots)$, 
	\item repeat until $1+\sum iN_i=n$, that is, until $\bo{S}$ is a \emph{degree sequence} of some tree having size $n$ (here $N_i$ represents the number of individuals in a tree having $i$ children),
	\item define the \emph{child sequence} $\bo{c}(\bo{S}):=\bo{c}=(c_1,\ldots, c_n)$, which is a vector with $N_0$ zeros, $N_1$ ones, and so on, construct the walk $W^b$ having increments $(c_{\pi(j)}-1,j\in [n])$, where $\pi$ is a uniform random permutation on $[n]$, 
	and let $W$ be the \emph{Vervaat transform} of $W$, a walk with increments $(c_{\pi(i^*+j)}-1,j\in [n])$, where $i^*+j$ is considered modulo $n$ and $i^*$ is the first time $W^b$ reaches its minimum value, 
	\item the excursion $W$ has the same law as the BFW of a BGW tree conditioned to have size $n$, 
\item show that this algorithm takes an expected time of smaller order than a naive method. 
\end{enumerate}

Our simple algorithm (which we state now informally), to simulate a MBGW forest conditioned on having $\bo{n}=(n_1,\ldots, n_d)$ individuals of each type, and with offspring distribution $\bm \mu =(\mu_1,\ldots, \mu_d)$, where each $\mu_i$ is a distribution on $\mathbb{Z}^d_+$, requires to generalize the previous steps: 

\begin{enumerate}
	\item define the vector of vectors $\bb{S}=(\bo{S}_{
		i,j},i,j\in [d])$ where $\bo{S}_{i,j}=(N_{i,j}(k),k\in \z_+)$ has a multinomial distribution with parameters $(n_i;\mu_{i,j}(0),\mu_{i,j}(1),\ldots)$,
	\item repeat until it is a multitype degree sequence,
	\item generate a uniform multitype forest with given degree sequence $\bb{S}$ (UMFGDS), that is, construct a walk $\bb W^b$ using the uniformly permuted multitype child sequence of $\bb{S}$ and apply to this a multidimensional Vervaat transform, obtaining a multidimensional BFW,
	\item obtain the joint law of the total size by types of a MBGW forest, 
	\item join the previous results to construct a forest from the degree sequence $\bb S$, which has a law proportional to the law of a CMBGW forest,
\item show that this algorithm takes an expected time of smaller order than a naive method. 
\end{enumerate}
All the steps (1)--(6) constitute our novel theoretical and algorithmic contributions. 
More importantly, except for Step (6), our hypotheses are quite general, requiring the standard assumptions for the forest to go extinct (non-simplicity, recurrence, (sub)criticality); and the extra assumption of independence between types.  
To show Step (6), we assume a finite variance condition and a condition to simplify our generalized Otter-Dwass formula and compute asymptotic terms. 
Nevertheless, we also prove a lemma showing that, a finite variance condition and an offspring distribution independent of the parent's type are enough for Step (6). 
To prove that our algorithm outperforms a naive method, we also show a local limit theorem-like result for the joint law of the sizes by types of the forest, a result interesting on its own.

To establish these steps, we must first introduce the multidimensional breadth-first walk encoding of multitype forests (relying on \cite{MR3449255}) and characterize the \emph{good cyclical permutations} that identify when a multidimensional path successfully codes a valid forest. This allows us to define multitype degree sequences and generalize the Vervaat transform to multiple dimensions to map 'multidimensional bridges' into valid forest-coding paths. This leads to our first main result, Theorem \ref{teoConstructionOFUniformMtypeForestWithGDS}, which provides an exact procedure to construct a uniform multitype forest with a given degree sequence, fulfilling Step (3).
This extends to the multidimensional setting, Lemma 7 in \cite{MR3188597}. 

The theoretical importance of UMFGDS lies in their intrinsic connection to CMBGW forests. We demonstrate that under an independence assumption on the offspring distribution, CMBGW forests are explicitly finite mixtures of UMFGDS (cf.\ Proposition 8 in \cite{randomTreesHaveHeightO_nLouigi2022}). To evaluate the precise probabilities required for Step (4), we generalize the classical Otter--Dwass formula to the multitype setting. Theorem \ref{teoLawOfTheTotalPopulationByTypes} provides the exact joint law of the number of individuals of each type in a MBGW forest, from which we can directly deduce the exact law of the total population (Corollary \ref{coroLawOfTheTotalPopulationInAMBGW}).
Both of these results are interesting on their own. 

Integrating these elements culminates in Theorem \ref{teoSimulationOfCMBGWIntro}, our main algorithmic contribution, which provides a fast and exact rejection-based method to simulate CMBGW forests (Step 5). We further prove in Theorem \ref{teoIntroComplexityTime} that our algorithm rigorously outperforms a standard na\"ive algorithm in terms of expected complexity. 

At the end of the main results section, we apply our probabilistic framework of Theorem \ref{teoLawOfTheTotalPopulationByTypes}, to derive explicit enumeration formulas for uniform multitype plane, labeled, and binary forests with prescribed roots and sizes by types.

\textit{Organization of the paper:}
In Section \ref{sectionMainResults} we present our main results, providing the complete framework to state them formally.
In Section \ref{sectionConstructionOfMFGDS} we prove Theorem \ref{teoConstructionOFUniformMtypeForestWithGDS},  which constructs UMFGDS as the multidimensional Vervaat transform of a bridge obtained from a multidimensional degree sequence. 
Section \ref{sectionLawOfNumberOfIndividualsByTypes} is devoted to proving the joint law of the number of individuals by types (Theorem \ref{teoLawOfTheTotalPopulationByTypes}) and the total population law (Corollary \ref{coroLawOfTheTotalPopulationInAMBGW}). Furthermore, we supply explicit examples satisfying our hypotheses to derive the enumerations of specific combinatorial forests in Subsection \ref{subsectionExamplesMultitypeGeoPoiBer}. In Section \ref{sectionAlgorithmsConditionedRandomForests}, we establish that CMBGW forests are mixtures of UMFGDS and present the proof of our main algorithm, Theorem \ref{teoSimulationOfCMBGWIntro}. Section \ref{sectionExpectedComplexity} provides a complexity analysis of our approach compared to a na\"ive method.

\section{Main results}\label{sectionMainResults}

\subsection{Coding multitype forests}\label{subsubsectionCodingForests}

A \emph{rooted plane tree} $\bo t$ is a connected graph with no cycles having a distinguished vertex, together with a natural identification of each vertex by a finite sequence of non-negative integers (denoting its location on the tree). 
A \emph{rooted plane forest} is a planar graph whose connected components are rooted plane trees.
In this paper, we only consider forests consisting of a finite number of finite trees. 
For simplicity we just refer to them as trees or forests. 

We consider forests where each tree is labeled according to the BFO. 
Thus, the individuals of a plane forest $\bo f$ can be labeled as $(u_i;i\in \#\bo f)$ in BFO, where $\#\bo f$ is the \emph{total number of vertices} of $\bo f$. 
The \emph{degree sequence}  ${\bf s}=(n_0,n_1,\ldots)$ of a forest $\bo f$ satisfies $n_i:=n_i(\bo f)=|\{u\in \bo f:\kappa_{\bo f}(u)=i \}|$, where $\kappa_{\bo f}(u:=)\kappa(u)$ is the number of children of individual $u$ in the forest. 

Now, we briefly recall the BFW coding of a forest in the multitype case, following  \cite{MR3449255}.
Define $[n]=\{1,\ldots,n \}$ and $[n]_0=\{0,1,\ldots,n \}$ for $n\in\na$. 
For a forest $\bo f$, let $\rm{type}_{\bo f}:\bo f\mapsto [d]$ be an application from the set of vertices of $\bo f$ to $[d]$, such that the children of each vertex are ordered by its type, that is, if $u_i,u_{i+1}\ldots, u_{i+j}\in \bo f$ have the same parent, then $\rm{type}_{\bo f}(u_i)\leq \rm{type}_{\bo f}(u_{i+1})\leq \cdots \leq \rm{type}_{\bo f}(u_{i+j})$. 
The couple $(\bo f,\rm{type}_{\bo f})$ is called a \emph{$d$-multitype forest}, and to ease notation we only denote by $\bo f$ the multitype forest. 
If the forest $\bo f$ has $r_i$ trees with roots type $i$ for all $i$, we order the trees according to the type, so that $\rm{type}_{\bo f}(u_i)\leq \rm{type}_{\bo f}(u_{j})$ whenever $u_i$ and $u_j$ are roots. 
A \emph{subtree of type $i$} of $\bo f$, denoted by $\bo t^{(i)}$, is a maximal connected subgraph of $\bo f$ whose vertices are all of type $i$. 
Subtrees of type $i$ are ranked according to the order of their roots, and with this ordering, we define the \emph{subforest of type $i$} of $\bo f$ as $\bo f^{(i)}=\{\bo t^{(i)}_1, \ldots, \bo t^{(i)}_k,\ldots\}$.
We denote by $(u_n^{(i)};n\geq 1)$ the labeling of the subforest $\bo f^{(i)}$ in its own BFO. 
For $u\in \bo f$, denote by $\kappa_i(u)$ the number of children of type $i$ of $u$.  
We denote by $\# \bo f:=(\#_1 \bo f,\ldots, \#_d \bo f)$, the vector with $i$-th entry the number of vertices in the subforest $\bo f^{(i)}$ of $\bo f$. 
For a forest $\bo f$ with $\#_i\bo f=n_i$ for every $i$, the coding of the forest, called also the  \emph{breadth-first walk} (BFW) of $\bo f$, are the $d$-dimensional chains $\bo x^{(i)}=(x^{i,1},\ldots, x^{i,d})\in \mathbb{Z}^d$ with length $n_i$, defined for $0\leq n\leq n_i-1$ if $n_i\geq 1$, by
\begin{equation}\label{eqnChainsEncodingAMultitypeTree}
	x^{i,j}_{n+1}-x^{i,j}_{n}=\kappa_j(u_{n+1}^{(i)}) -\indi{i=j}
	\ \ \ \ \ \ \ \ \ \ i,\,j\,\in\,[d],
\end{equation}with $\bo x_0^{(i)}=\bo 0$. 
In Figure \ref{figBFOandSUbtrees} we show the BFO and BFW of a forest. 

\begin{figure}
	\begin{tabular}{ccc}
		\multirow{4}{*}[1.8cm]{\includegraphics[clip, trim=4.6cm 16.5cm 4.8cm 4.3cm, width=.45\textwidth]{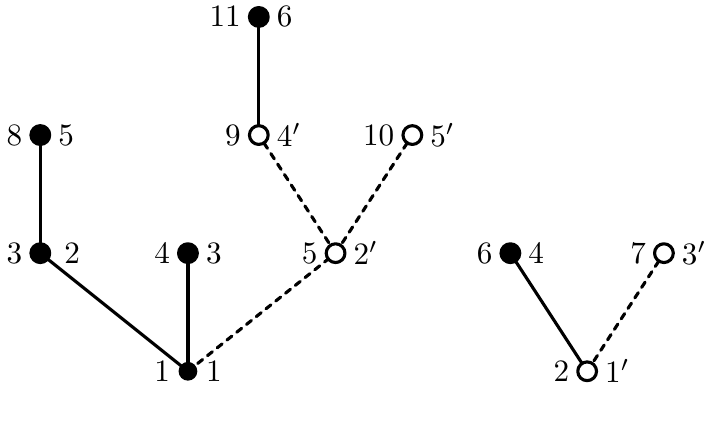} }&
		\vspace{.4cm} \includegraphics[clip, trim=4.5cm 18cm 10cm 4.5cm, width=.25\textwidth]{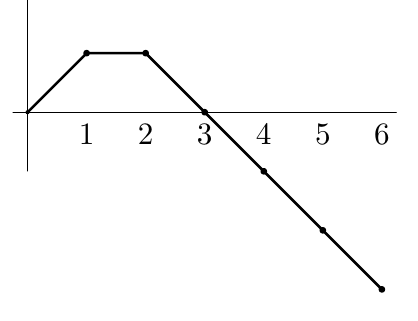} &  \vspace{-1.4cm}
		\includegraphics[clip, trim=4.5cm 18cm 10cm 4.5cm, width=.25\textwidth]{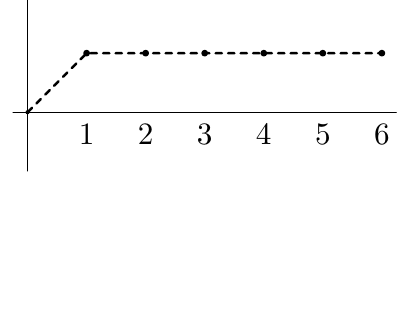} \\
		& $x^{1,1}$ & $x^{1,2}$ \\[6pt]
		& \vspace{-1cm} \includegraphics[clip, trim=4.5cm 19cm 11cm 4.5cm, width=.25\textwidth]{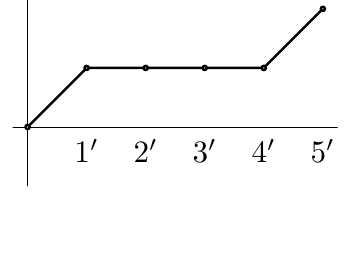} &   
		\includegraphics[clip, trim=4.5cm 19cm 11cm 4.5cm, width=.25\textwidth]{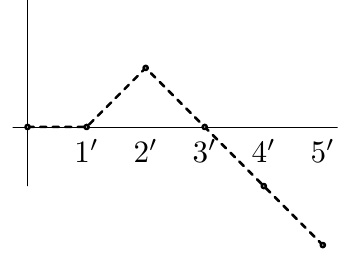} \\
		& $x^{2,1}$ & $x^{2,2}$ \\[6pt]
	\end{tabular}
	\caption{A multitype forest $\bo f$ with 2 types. Type 1 individuals are full circles, while type 2 individuals are empty circles. When a vertex has a type 1 child the edge is depicted continuous, whereas for a type 2 child is depicted dashed. The left-hand side label of the vertices is the breadth-first order of the (multitype) forest, and the right-hand side label is the BFO of the type 1 and type 2 subforest, labeled by $1,2,3,4,5,6$ and $1',2',3',4', 5'$ respectively. The plot on the right-hand side is the BFW $(x^{i,j})$ of $\bo f$.}\label{figBFOandSUbtrees}
\end{figure}

To describe the set of all possible chains $(\bo x^{(i)},i\in[d])$ that code a multitype forest, we require some definitions.
For $n\in \na$, 
consider any application $\bo y: [n]_0\mapsto \mathbb{Z}^d$ with $\bo y=(y(0),\ldots, y(n))$ and $ y(0)=0$. 
The \emph{$n$-cyclical permutations} of $\bo y$ are the $n$ applications $\theta_{q,n}(\bo y)$, for $q\in [n-1]_0$ given by
\[ \theta_{q,n}(\bo y)=\begin{cases} 
	y(j+q)-y(q) & j\leq n-q\\
	y(j+q-n)+y(n)-y(q) & n-q\leq j\leq n.
\end{cases}
\]We say that the path $\bo y:\na\mapsto \z$ is a \emph{downward skip-free chain}, if $y_{k+1}-y_k\in \z_+\cup\{-1 \}$.
All cyclical permutations of the BFW of multitype forests can be described as follows.

\begin{defi}\label{defiCodifyingSequencesOfMultitypeForests}
	Fix any $\bo{n}=(n_1,\ldots, n_d)\in \z^d_+$, and define $S_d$ as the set of $[\mathbb{Z}^d]^d$-valued sequences $\bb x=(\bo x^{(1)},\ldots, \bo x^{(d)})$ such that for all $i\in [d]$, $\bo x^{(i)}=(x^{i,1},\ldots, x^{i,d})$ is a $\mathbb{Z}^d$-valued sequence starting at zero with domain $[n_i]_0$, and where $x^{i,j}=(x^{i,j}_k,k\in [n_i]_0)$ is non-decreasing when $i\neq j$, and a downward skip-free chain when $i=j$. 
\end{defi}Sequences $\bb x\in S_d$ will be denoted also by $\bb x=(x^{i,j}_k,k\in [n_i]_0,i,j\in [d])$, and the vector $\bo{n}=(n_1,\ldots, n_d)\in \mathbb{Z}^d_+$, is called the \emph{length} of $\bb x$.
The 
\emph{${\bf n}$-cyclical permutations} of $\bb x\in S_d$ are given by
\begin{equation*}
	\theta_{{\bf q},{\bf n}}(\bb x):=(\theta_{q_1,n_1}(\bo x^{(1)}),\ldots, \theta_{q_d,n_d}(\bo x^{(d)}))\ \ \ \ \ \ \ \ \ \forall\ {\bf q}=(q_1,\ldots, q_d) \mbox{ such that }0\leq {\bf q}\leq {\bf n}-\bo 1_d,
\end{equation*}with $\bo{1}_d=(1,\ldots, 1)$ a vector of length $d$, and where we write $\bo{m}\leq\bo{n}$ if the inequality holds component-wise, for $\bo{m,n}\in \z^d_+$. We write $\bo{m}<\bo{n}$ if $\bo{m}\leq \bo{n}$ and if there exists $i$ such that $m_i< n_i$.
Each sequence $\theta_{{\bf q},{\bf n}}(\bb x)$ will be called a \emph{cyclical permutation} of $\bb x$. 
We say that the system $(\bo r,\bb x)$ has \emph{solution} $\bo m\leq \bo n$ if $\bo r+\bo 1\cdot \bb x_{\bo{m}}=\bo 0$, where $\bb x_{\bo m}:=\big(x^{i,j}_{m_i}\big)_{i,j}$.

\begin{defi}[Good cyclical permutation]\label{defiCodifyingSequencesOfMultitypeForestsWithR}
Let $\bb x\in S_d$, of length  $\bo{n}=(n_1,\ldots, n_d)\in \mathbb{N}^d$ and $\bo{r}=(r_1,\ldots, r_d)\in \mathbb{Z}^d_+$ with $\sum r_i>0$ and $\bo r\leq \bo n$. 
We say that $\bo{\theta_{{\bf q},{\bf n}}}(\bb x)$ is a \emph{good cyclical permutation}, if $\bo n$ is a solution of the system $(\bo{r},\bo{\theta_{{\bf q},{\bf n}}}(\bb x))$, and if there is no smaller solution $\bo{m}<\bo{n}$.
Define $S^{\bo r}_d$ as the subset of $S_d$ whose length $\bo n$ is the smallest solution to the system $(\bo r,\bb x)$. 
\end{defi}

Consider $\bb x\in S^{\bo r}_d$ of length $\bo n\in \na^d$ with $x^{i,i}(n_i)\neq 0$ for every $i\in [d]$. 
Then, Lemma 3.3 in \cite{MR3449255} says that the number of good cyclical permutations of $\bb x$ is $det(-\bb x_{\bo n})$, the determinant of the matrix $-\bb x_{\bo n}$.

We will write $\bo T(\bo{r},\bb x)$ to denote the smallest solution of the system $(\bo r,\bb x)$, that is
\begin{equation}\label{eqnDefinitionT_rx}
\bo T(\bo{r},\bb x):=\min\{\bo{m}\in \bb Z^d_+:r_j+\paren{\bo 1\cdot \bb x_{\bo{m}}}_j=0,\ \forall\ j \},
\end{equation}
The latter is also called a \emph{multidimensional first-passage time}.
For simplicity we write $\bo T_{\bo r}$ when no confusion is possible.
The most important property of $\bo T_{\bo r}$, is that when the system $(\bo r,\bb x)$ has its length $\bo n$ as the smallest solution, then it is the BFW of a multitype forest, which is Theorem 2.7 in \cite{MR3449255}.
That is to say, $\bb x\in S^{\bo r}_d$ are the actual sequences that code a multitype forest with $r_i$ roots type $i$ and $n_i$ individuals type $i$.

Since in most of the cases, we fix the number of roots or number of individuals of each type, we need the following definition.

\begin{defi}[Root-type and individuals-type]
	We say a multitype forest with $d\in \na$ types has \emph{root-type} $\bo{r}=(r_1,\ldots, r_d)\in \z^d_+$, if it has $r_i$ roots of type $i$ for $i\in [d]$, with $\sum r_i>0$. 
	We say it has \emph{individuals-type} $\bo{n}=(n_1,\ldots, n_d)\in\na^d$ if it has $n_i\geq r_i$ individuals of type $i$, for $i\in [d]$.
\end{defi}

\subsection{Simulation of multitype forests with a given degree sequence}

The above setting tells us how to code a multitype forest, and conversely, when a given a path in $S_d$ codes a forest.
Now, we not only construct paths coding a forest, but also, impose that the latter has a given multitype degree sequence.
A \emph{multitype degree sequence} $\bb{s}=(\bo{s}_{i,j},i,j\in[d])$ is a sequence of sequences of non-negative integers $\bo{s}_{i,j}=(n_{i,j}(k);k\in [m_{i,j}]_0)$, with $m_{i,j}\in \z_+$, satisfying:
\begin{enumerate}\label{defiMultivariateDegreeSeq}
	\item For each $i \in [d]$, the sum $\sum_k n_{i,j}(k)$ is independent of $j$ and strictly positive; we denote it by $n_i$,
	\item $n_j=r_j+\sum\limits_{k=1}^{m_{1,j}}k n_{1,j}(k)+\cdots +\sum\limits_{k=1}^{m_{d,j}}k n_{d,j}(k)$, for every $j\in [d]$, with $0\leq r_i\leq n_i$ for every $i$, and $r_i>0$ for some $i$,
	\item $\det(-\bb k)>0$ with $\bb k:=(k_{i,j})$, and  $k_{i,j}:=\sum\limits_{k=1}^{m_{i,j}} k n_{i,j}(k)-n_i\indi{i=j}$. 
\end{enumerate}The value $n_{i,j}(k)$ represents the number of individuals of type $i$ with $k$ children of type $j$, so $n_i$ represents the total number of individuals of type $i$. 
Thus, the total number of vertices is $\# \bb s:=n_1+\cdots +n_d=\sum_k n_{1,j}(k)+\cdots +\sum_k n_{d,j}(k)$ for $j\in [d]$.
Table \ref{tableDefinitionMultitypeDegreeSequences} summarizes the case $d=2$.
More explicitly, the forest in Figure \ref{figBFOandSUbtrees} has multitype degree sequence $\bo{s}_{1,1}=(4,1,1)$, $\bo{s}_{1,2}=(5,1)$, $\bo{s}_{2,1}=(3,2)$ and $\bo{s}_{2,2}=(3,1,1)$.  

\begin{table}[ht]
	\centering 
	\begin{tabular}{c c ||c} 
		$\bo{s}_{1,1}=(n_{1,1}(0),\ldots, n_{1,1}(m_{1,1}))$ & $\bo{s}_{1,2}=(n_{1,2}(0),\ldots, n_{1,2}(m_{1,2}))$ & $n_1=\sum_k n_{1,j}(k)$ \\ [1ex]
		$\bo{s}_{2,1}=(n_{2,1}(0),\ldots, n_{2,1}(m_{2,1}))$ & $\bo{s}_{2,2}=(n_{2,2}(0),\ldots, n_{2,2}(m_{2,2}))$ & $n_2=\sum_k n_{2,j}(k)$ \\[1ex]\hline\hline
		$n_1=r_1+\sum k n_{1,1}(k)+\sum k n_{2,1}(k)$ & $n_2=r_2+\sum k n_{1,2}(k)+\sum k n_{2,2}(k)$ & $n_1+n_2=\# \bb s$ \\ [1ex] 
	\end{tabular}
	\caption{Relations on the degree sequence of a 2-type forest.} 
	\label{tableDefinitionMultitypeDegreeSequences} 
\end{table}
Let $\mathbb{F}_{\bb{s},\bo r}$ be the set of multitype plane forests with degree sequence $\bb{s}$, having root-type $\bo{r}$ and individuals-type $\bo{n}$. 
In this paper we work with the following random forests. 

\begin{defi}[Uniform multitype forests with a given degree sequence]\label{defiMFGDS}
	A \emph{uniform multitype forest with a given degree sequence} (MFGDS), with multitype degree sequence  $\bb{s}$ and having root-type $\bo{r}$, is a multitype forest chosen uniformly at random from $\mathbb{F}_{\bb s, \bo r}$.
	Its law will be denoted by
	$\p_{\bb s, \bo r}$.
\end{defi}

As in the unitype case, we construct the canonical \emph{child sequence} $\bb c=(\bo{c}_{i,j},i,j\in [d])$ from the degree sequence, that is, let $\bo{c}_{i,j}$ be a sequence whose first $n_{i,j}(0)$ entries  are zeros, the next $n_{i,j}(1)$ entries are ones, and so on. 
Let $\sigma_{i,j}$ be any permutation on $[n_i]$, 
and construct $\bb w^b=(w^b_{i,j};i,j\in [d] )$, where 
\begin{equation}\label{eqnBridgesFromAMultitypeDegreeSequence}
	w^b_{i,j}(k)=\sum_{\ell=1}^k\paren{\bo{c}_{i,j}\circ \sigma_{i,j} (\ell)-\indi{i=j}}, \ \ \ \ k\in[n_i]_0.
\end{equation}
\begin{remark}\label{remarkDegreeSequenceIsWellDefined}
	Note that $k_{i,j}=k_{i,j}(\bb w^b):=w^b_{i,j}(n_i)$ does not depend on the permutation, so it is deterministic, justifying the superscript $b$ standing for \emph{bridge}. 
Note also that the system  $(\bo{r},\bb w^b)$ admits $\bo{n}$ as a solution.
\end{remark}

\begin{defi}
Fix any $\bo{n}=(n_1,\ldots, n_d)\in \mathbb{N}^d$ and $\bo{r}=(r_1,\ldots, r_d)\in \mathbb{Z}^d_+$ with $\sum r_i>0$ and $\bo r\leq \bo n$. 
Consider $(k_{i,j}; i,j\in [d])$ with $-k_{j,j}=r_j+\sum_{i \neq j}k_{i,j}$, and $k_{i,j}\in \mathbb{Z}_+$. 
We denote by $\mathbb{B}_{\bb s, \bo r}\subset S_d$ the set of bridges $\bb w\in S_d$, where $(\bo r,\bb w)$ has $\bo n$ as solution, ending at the same values $k_{i,j}=w_{i,j}(n_i)$, and with paths having the multitype degree sequence $\bb s$ (that is, its increments constructed from $\bb s$).
\end{defi}

We define a Vervaat-type transformation of $\bb w^b$, given by choosing uniformly at random a good cyclical permutation from all the $\det(-\bb k)$ good cyclical permutations of $\bb w^b$.
After that, the algorithm is similar to the unidimensional case.
More formally, consider the set of \emph{Permutations} of $\bb w^b$ that are \emph{Cyclical} and produce a forest, that is
\[
PC(\bb w^b):=\{\bo{q}:0\leq \bo{q}\leq \bo{n}-\bo{1}_d \mbox{ such that }  \theta_{{\bf q,n}}(\bb w^b) \mbox{ is a good cyclical permutation}\}.
\]Order the elements of $PC(\bb w^b)$ as $\bo{q}_1<\bo{q}_2<\cdots <\bo{q}_{det(-\bb k)}$ using the lexicographic order. 
Thus, we can define a function $\phi: \mathbb{B}_{\bb s, \bo r}\times [det(-\bb k)]\mapsto S_d$ that maps an integer $u\in [det(-\bb k)]$ and a path $\bb w^b$ to the $u$-th good cyclical permutation $\theta_{{\bo{q}}_u,\bo{n}}(\bb w^b)$.

\begin{defi}[Multidimensional Vervaat Transform]\label{defiMultidimensionalVervaatTransform}
	Given $\bo{r}$ and $\bb{s}$, fix any $\bb w^b\in\mathbb{B}_{\bb s, \bo r}$ and any $u\in [\det(-\bb k)]$. 
	Define the multidimensional Vervaat transform $V(\bb w^b, u)$ of $\bb w^b$ at $u$ as
	\[
	V(\bb w^b, u):=\theta_{\phi(\bb w^b,u),{\bf n}}.
	\]
\end{defi}
In words, given a bridge $\bb w^b$ of length $\bo n$ that has $\bo n$ as solution to $(\bo r,\bb w^b)$, the path $V(\bb w^b, u)$ codes a multitype forest with root-type $\bo r$ and individuals-type $\bo n$ for any $u\in [\det(-\bb k)]$.
Furthermore, as $u$ ranges over $[\det(-\bb k)]$, one recovers the complete set of multitype forests obtainable from $\mathbf{w}^b$ via cyclical permutations. 
The previous definition generalizes to the multidimensional case the result from \cite{MR0515820}. 

Having established the deterministic mapping from a degree sequence to its corresponding coding paths, we now present an algorithm for simulating MFGDS. The efficiency of this approach lies in its reduction of the sampling problem: rather than selecting a forest uniformly at random from the entire combinatorial space, the randomness is restricted to $d^2$ uniform permutations and a single uniform random variable.

Let $\bm \pi=(\pi_{i,j},i,j\in [d])$ be independent uniform permutations, where $\pi_{i,j}$ takes values on $[n_i]$, and let $U$ be an independent uniform variable on $[\det(-\bb k)]$, with $\bb k$ defined as in Remark \ref{remarkDegreeSequenceIsWellDefined}. 
Define the process $\bb W^{b,\bb s}=(W^{b,\bb s}_{i,j},i,j\in [d])$ using 
the child sequence $\bb c=\paren{c_{i,j},i,j\in [d]}$ of $\bb s$ and the uniform permutations $(\pi_{i,j})$, as in \eqref{eqnBridgesFromAMultitypeDegreeSequence}.

\begin{teo}[Uniform multitype forest with a given degree sequence]\label{teoConstructionOFUniformMtypeForestWithGDS}
	Fix the degree sequence $\bb s$ of a multitype forest having root-type $\bo{r}$ and individuals-type $\bo{n}$.
	Let $\bb W^{\bb s}$ be the BFW coding a forest taken uniformly at random from $\mathbb{F}_{\bb s, \bo r}$. 
	On the other hand, consider $\bb W^{b,\bb s}$ as defined above.	
	Then 
	\begin{equation*}
		V(\bb W^{b,\bb s},U)\stackrel{d}{=}\bb W^{\bb s}.
	\end{equation*}
\end{teo}The proof is a combinatorial argument, and relies strongly on the fact that $\bb k$ is deterministic. 
This theorem generalizes Lemma 7 in \cite{MR3188597}, to the multidimensional setting.
We derive that $|\mathbb{F}_{\bb s, \bo r}|$, the number of multitype forests with a given degree sequence $\bb s$ and root-type $\bo r$ (cf. Theorem 3.3.2 in \cite{nguyen:tel-01461615}) is
\begin{equation*}
	|\mathbb{F}_{\bb s, \bo r}|=\frac{\det(-\bb k)}{\prod n_i}\prod\prod {n_i\choose \bo s_{i,j}}.
\end{equation*}

In Figure \ref{figMFGDS32000} we depict a simulation of a MFGDS.

\begin{figure}
	\centering
	\includegraphics[width=.75\textwidth]{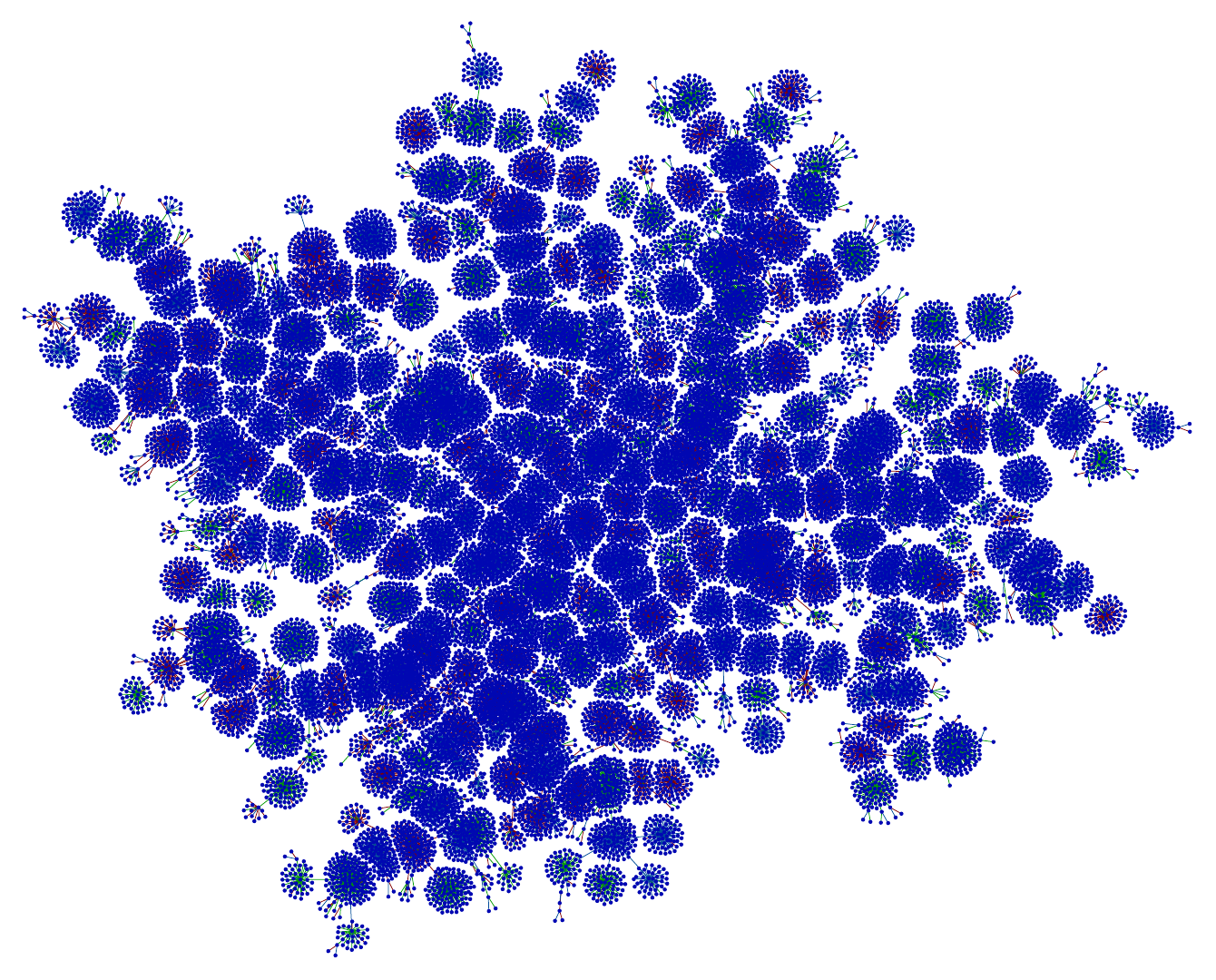}
	\caption{MFGDS with three types, having 32\,000 vertices. }\label{figMFGDS32000}
\end{figure}

\subsection{Simulation of multitype Bienaym\'e-Galton-Watson forests conditioned on the total size of each type}

Consider $\bm \mu=(\mu_1,\ldots, \mu_d)$, where each $\mu_i$ is a distribution on $\mathbb{Z}^d_+$.
A MBGW forest with $d$-types and root-type $\bo{r}$, is a random multitype forest in which each of the $r_i$ roots have children according to $\mu_i$ for each $i\in [d]$, and for every $j$, each children type $j$ has children independently of the other individuals in its generation, according to $\mu_j$.
We call $\bm \mu$ the \emph{progeny distribution} or \emph{offspring distribution} of the forest.
The formal definition is the following.

\begin{defi}
A multitype Bienaym\'e-Galton-Watson forest $\mathcal{F}_{\mathbf{r}}$ with $d$ types, root-type $\mathbf{r} \in \mathbb{Z}^d_+ \setminus \{\mathbf{0}\}$, and offspring distribution $\bm{\mu} = (\mu_i)_{i \in [d]}$,  is a random variable under $\bo P$ taking values in the set $\mathbb{F}_{\mathbf{r}}$ of  finite multitype forests with root-type $\bo r$. Its law is characterized by the property that for every fixed forest $\mathbf{f} \in \mathbb{F}_{\mathbf{r}}$, 
\[
\bo P(\mathcal{F}_{\mathbf{r}} = \mathbf{f}) = \prod_{u \in \mathbf{f}} \mu_{{\rm type}_{\mathbf{f}}(u)} \big( \kappa_1(u), \ldots, \kappa_d(u) \big).
\]We denote by $\bo O_{\bo r}:=(\#_1 \mathcal{F}_{\bo r},\ldots, \#_d \mathcal{F}_{\bo r})$ the vector coding the \emph{number of individuals of each type}. 
\end{defi}

As in Theorem 1.2 in \cite{MR3449255}, we consider MBGW forests satisfying the following.
For $i,j\in [d]$, let $m_{i,j}=\sum_{\bo z\in \z_+^d}z_j\mu_i(\bo z)$ be the mean number of children type $j$ given by an individual type $i$, and set $\bb M=(m_{i,j})_{i,j}$ as the mean matrix of the MBGW forest. 
Whenever $\bb M$ is irreducible and finite, by the Perron-Frobenius Theorem (see \cite[Chapter V.2]{MR2047480}), it has a unique eigenvalue which is simple, positive and with maximal modulus. 
We say in such case that the MBGW forest is \emph{irreducible}. 
If the unique eigenvalue is smaller, equal or greater than one, then we say the forest is \emph{subcritical, critical or supercritical}, respectively.
The forest is \emph{non-degenerate} if individuals have exactly one offspring with probability different from one.

Let $\bb X:=(\bo X^{(1)}, \ldots, \bo X^{(d)})$ be $d$ random walks on $\mathbb{Z}^d$ starting at zero under $\p$, where
\begin{equation}\label{eqnDefinitionLawOfXToi}
	\proba{\bo X^{(i)}_1=\bo{k}}=\mu_i(\bo{k}+\bo{e}_i)\qquad i\in [d],
\end{equation}$\bo{k}$ is a vector with entries in $\mathbb{Z}_+$ except at position $i$, which takes values on $\mathbb{Z}_+\cup\{-1\}$, and $\bo{e}_i$ is the vector with zeros except a one at position $i$. 
We will write $\bo X^{(i)}=(X^{i,1},\ldots, X^{{i,d}})$.
Let us denote by $\# \bo n=\sum_{\ell\in [d]} n_\ell$ and $\# \bo r=\sum_{\ell\in [d]} r_\ell$. 
Our hypotheses are the following:

\begin{description}
	\item[H1]\label{descriptionCMGIndependence} For every $i\in [d]$, the law $\mu_i$ has independent components, that is
	\begin{equation*}
		\mu_i\paren{\bo{k}+\bo{e}_i}=\prod_{j}\proba{X^{i,j}_{1}=k_{j}}\ \ \ \ \ \ \bo{k}=(k_1,\ldots, k_d), k_j\in \mathbb{Z}_+\mbox{ for $j\neq i$ and }k_i\in \mathbb{Z}_+\cup\{-1\}.
	\end{equation*}
	
	\item[H2]\label{descriptionCMGConvolution} For $\mathbf{r} \in \mathbb{Z}^d_+ \setminus \{\mathbf{0}\}$, $\bo n\in \na^d$, and $i,j\in [d]$, with $i\neq j$
	\[
	\esp{X^{i,j}_{n_i} \;\middle|\; \sum_{\ell\in [d]}X^{\ell,j}_{n_\ell} = -r_j}
	=\frac{n_i(n_j-r_j)}{\# \bo n}.
	\]
\end{description}

Since the above conditional expectations will be important in this paper, for $i, j \in [d]$ with $i \neq j$, let us define
	\[
	\widehat{k}_{i,j}(\bo{n}, \bo{r}) := \esp{X^{i,j}_{n_i} \;\middle|\; \sum_{\ell=1}^d X^{\ell,j}_{n_\ell} = -r_j}.
	\]
Let $\widehat{\bb k}(\bo{r}, \bo{n})$ be the $d \times d$ matrix with off-diagonal entries $\widehat{k}_{i,j}(\bo{n}, \bo{r})$ and diagonal entries $-\widehat{k}_{j,j}(\bo{n}, \bo{r}) = r_j + \sum_{i \neq j} \widehat{k}_{i,j}(\bo{n}, \bo{r})$. 

Now we state an algorithm to simulate MBGW forests conditioned by its types, which is our main result.
The idea behind it relies on the fact that 
the BFW of a MBGW forest with root-type $\bo r$ and offspring distribution $\bm \mu$ conditioned on $\bo O_{\bo r}=\bo n$, say $\bb W_{\bo n,\bo r}$, satisfies 
\[
\bb W_{\bo n,\bo r}\stackrel{d}{=}\big(\left.\bb X\right| \bo T_{\bo r}=\bo n\big),
\]where the random walk $\bb X$ is constructed as in \eqref{eqnDefinitionLawOfXToi}. 
This is one of the consequences of
Theorem 2.7 in \cite{MR3449255}. 
The above relationship is used, together with an acceptance-rejection method (see Algorithm \ref{algorithm8}) to simulate in a fast way such conditioned MBGW forests using only multinomial random variables, uniform permutations and a uniform random variable.

\begin{teo}[Simulation of CMBGW$(\bo n,\bo r)$ forests]\label{teoSimulationOfCMBGWIntro}
Consider an offspring distribution $\bm{\mu} = (\mu_i)_{i \in [d]}$ from an irreducible, non-degenerate, (sub)critical MBGW forest, with root-type $\bo{r} \in \mathbb{Z}^d_+\setminus \{\bo 0\}$, and consider $\bo{n}\in \na^d$ with $\bo r\leq \bo n$.
Assume also that  \hyperref[descriptionCMGIndependence]{\bo{H1}} is satisfied.  
Generate independent multinomial vectors $(N_{i,j}(0),N_{i,j}(1),\ldots)$ with parameters $(n_i;\mu_{i,j}(0),\mu_{i,j}(1),\ldots )$, until the first time that $r_j+\sum_i\sum_k k N_{i,j}(k)=n_j$ for every $j$. 
	Denote by $\bb{S}$ the multitype degree sequence obtained, and let $V(\bb W^{b,\bb S},U)$ be the BFW generated by Theorem \ref{teoConstructionOFUniformMtypeForestWithGDS} using the degree sequence $\bb{S}$. 
	Then, for $\bb X$ a random walk constructed as  in \eqref{eqnDefinitionLawOfXToi}, we have
	\begin{equation*}
		\p\paren{V(\bb{W}^{b,\bb S},U)=\bb w} =  \frac{\det\paren{-\widehat{\bb k}(\bo{r}, \bo{n})}}{\det(-\bb k)}\p\paren{\bb X=\bb w|\bo T_{\bo{r}}=\bo n},
	\end{equation*}for every path $\bb w$ coding a multitype forest $\bo f$ with root-type $\bo{r}$, individuals-type $\bo{n}$, multitype degree sequence $(n_{i,j},i,j\in [d])$ and with $k_{i,j}=\sum k n_{i,j}(k)-n_i\indi{i=j}$. 
If in addition \hyperref[descriptionCMGConvolution]{\bo{H2}} is satisfied, the determinant in the numerator evaluates exactly to $\frac{\# \bo r}{\# \bo n} \prod_{i=1}^d n_i$, yielding 
	\begin{equation*}
		\p\paren{V(\bb{W}^{b,\bb S},U)=\bb w} =  \frac{\# \bo r}{\# \bo n}\frac{\prod n_i}{\det(-\bb k)}\p\paren{\bb X=\bb w|\bo T_{\bo{r}}=\bo n}.
	\end{equation*}
\end{teo}

This theorem generalizes the results in \cite{MR2888318}. 
We emphasize that our setup is general: we do not assume that the offspring distributions $(\mu_i)_{i \in [d]}$ are identical, nor do we require identical distribution among the components within each $\mu_i$ or higher order moments. 
Furthermore, in Lemma \ref{lemmaExchangeabilityH2} we give a simple condition under which \hyperref[descriptionCMGConvolution]{\bo{H2}} holds. 

Other algorithms for the simulation of MBGW trees have been explored in the literature; for instance, we refer the reader to \cite{MR1136990, MR1331596, cstefuanescu1998simulation,MR4649394}.

Furthermore, we demonstrate that our algorithm is significantly more efficient than a naive rejection-based method, which simulates paths until one satisfying the required properties is obtained. 
To this end, we derive the computational complexity of generating a CMBGW$(\mathbf{n},\bo r)$ forest and show that it is strictly lower than that of the naive approach. 
This complexity analysis is established by evaluating the expected number of operations required to generate the underlying random variables and analyze their asymptotic behaviour.
To state it, denote by $\mu^{*\bo{n}}_{j}(-r_j):= \proba{\sum_{\ell\in[d] }X^{\ell, j}_{n_\ell}=-r_j}$ and by $\tau_{i,j}(n_i):=\esp{\max_{\ell\in [n_i]}\paren{X^{i,j}_{\ell}-X^{i,j}_{\ell-1}}}$. 
We assume we operate a hypothetical computer, called Random Access Machine (RAM model).  
It takes one time step to compute basic logical and arithmetic operations, to simulate a uniform random variable on $(0,1)$, to access the memory, and assume that independent copies of $X^{i,j}_1$, $i,j\in [d]$, 
can be generated in expected time bounded by one. 
The following complexity time is based in Algorithm \ref{algorithm8}. We remark that Algorithm \ref{algorithm8} only relies on an offspring distribution which is irreducible, non-degenerate, (sub)critical MBGW forest, and \hyperref[descriptionCMGIndependence]{\bo{H1}}.

\begin{algorithm}
	\caption{Generate a CMBGW($\bo{n},\bo r$) forest $\mathcal{F}$}
	\label{algorithm8}
	\begin{algorithmic}[1]
		\STATEx {\bf Input: }A distribution $\bm \mu$, $\bo r\in\mathbb{Z}^d_+\setminus \{\bo 0\}$ and $\bo n\in \mathbb{N}^d$ with $\bo r\leq \bo n$
		\STATEx {\bf Output: }A multitype forest with law $\bo P(\cdot \mid \bo O_{\bo r}=\bo n)$
		\STATE Generate independent multinomial vectors $\bo{S}_{i,j}=(N_{i,j}(0),N_{i,j}(1),\ldots)$ with parameters $(n_i;\mu_{i,j}(0),\mu_{i,j}(1),\ldots )$ 
		\STATE Let $K_{i,j}$ be the last non-zero component of $\bo{S}_{i,j}$, that is $N_{i,j}(K_{i,j})>0$ and $N_{i,j}(k)=0$ for $k>K_{i,j}$
		\STATE Define $\Xi_j:=\Xi(\bo{S}_{i,j},i\in [d])=r_j+\sum_i\sum_k k N_{i,j}(k)$ for every $j$
		\IF{$\Xi_j \neq n_j$ for some $j$} \STATE{\textbf{Reject} and repeat from Step 1} \ENDIF
		\STATE Define $k_{i,j}:=\sum k N_{i,j}(k)-n_i\indi{i=j}$ and construct the matrix $\bb k = (k_{i,j})$
		\STATE Generate an independent uniform variable $V$ on $[0,1]$
		\IF{$V > \frac{\det(-\bb k)}{\prod n_i}$} \STATE{\textbf{Reject} and repeat from Step 1} \ENDIF
        \STATE Apply Algorithm \ref{algorithmUMFGDS} to the accepted degree sequence $\paren{(N_{i,j}(0),\ldots, N_{i,j}(K_{i,j}));i,j\in [d]}$, obtaining a multitype forest $\mathcal{F}$ with breadth-first walk distributed as $V(\bb W^b,U)$, where $U$ is a uniform r.v. in $[\det(-\bb k)]$. 
	\end{algorithmic}
\end{algorithm}

\begin{teo}[Expected complexity time]\label{teoIntroComplexityTime}
Consider a MBGW forest which is irreducible, non-degenerate, (sub)critical, with root-type $\bo{r} \in \mathbb{Z}^d_+\setminus \{\bo 0\}$, and consider $\bo{n}\in \na^d$ with $\bo r\leq \bo n$.
Assume also that  \hyperref[descriptionCMGIndependence]{\bo{H1}} and \hyperref[descriptionCMGConvolution]{\bo{H2}} are satisfied.   
	Assuming a RAM model of computation, a CMBGW$(\bo n,\bo r)$ forest can be generated in expected time bounded above by a constant times
	\begin{equation*}
	\frac{\# \bo n}{\# \bo r} \paren{ \frac{d^2 + \sum_{i,j}\tau_{i,j}(n_i)}{\prod_{j=1}^d \mu^{*\bo{n}}_{j}(-r_j)} + d^3 } + \# \bo n \, \E\bra{ \prod_{i=1}^d (-X^{i,i}_{n_i}) \;\middle|\; B^{(\bo n)} },
	\end{equation*}
	where $B^{(\bo n)}$ is the event that the degree sequence is accepted in Step 9.
	In particular, 	$\# \bo n \to \infty$, $n_i = \Theta(\# \bo n)$ for all $i \in [d]$, and 
 if $\E\Big(\big(X^{i,j}_1\big)^2\Big)$ is finite for every $i,j$, then 
 the expected time complexity is  of the order
	\begin{equation*}
	o\paren{ (\# \bo n)^{(d+3)/ 2}}.
	\end{equation*}
\end{teo}

For comparison, the expected complexity time of a na\"ive method is (see Subsection \ref{subsectionComplexityNaive})
\[
\Theta\left( (\# \bo n)^{\max(d/2 + 2, \, d + 1)} \right).
\]

We use the notation $\Theta(\cdot)$ to denote the asymptotic order of magnitude, indicating that a term is bounded both above and below by its argument up to universal multiplicative constants.
Furthermore, we write $g(\bo n)=o(f(\bo n))$ to signify that the ratio $g(\bo n)/f(\bo n)$ vanishes as $\|\bo n\| \to \infty$.

The asymptotic behavior of the expected time complexity in theorem \ref{teoIntroComplexityTime}, is derived from the precise control of the probabilities $\mu^{*\bo{n}}_{j}(-r_j)$ and $\p\big(B^{(\bo n)}\big)$.
The latter satisfies $\p\big(B^{(\bo n)}\big)=\proba{\bo T_{\bo r}=\bo n}$, a relation we establish in the sequel.
To state the next result, which provides the asymptotic computation of these probabilities, recall that $m_{\ell,j} = \E\big(X^{\ell,j}_1 + \indi{\ell=j}\big)$.
We further define the mean and variance of the $j$-th component of the field at $\bo n$ as
$ m^{(\bo n)}_j := \esp{(\bo 1\cdot \bb X_{\bo n})_j} = \sum_{\ell=1}^d n_\ell m_{\ell,j} - n_j $
and $ \sigma^{(\bo n)}_j := {\rm Var}\paren{(\bo 1\cdot \bb X_{\bo n})_j} = \sum_{\ell=1}^d n_\ell \sigma^2_{\ell,j} $. 
Finally, let $D \subseteq [d]$ be the set of indices $\ell$ for which the law of $X^{\ell,j}_1$ is non-degenerate, and let $H_\ell$ denote its corresponding maximal span.

\begin{lemma}[Local Limit Theorem for multidimensional first-hitting times]\label{lemmaLLTTargetSums}
	Assume that for every $i,j \in [d]$, the offspring marginal distribution $\mu_{i,j}$ has finite variance $\sigma^2_{i,j} < \infty$, and that for each $j$, the greatest common divisor of $(H_\ell;\ell\in D)$ is 1. 
	Assume that as $\# \bo n \to \infty$, $n_i = \Theta(\# \bo n)$ for all $i \in [d]$. 
	If there exists a constant $c > 0$ such that $|-r_j - m^{(\bo n)}_j| \le c \sqrt{\# \bo n}$ for all $j \in [d]$, then
\[
\sup_{r\in \mathbb{Z}}\left|
 	\sqrt{\sigma^{(\bo n)}_j}\mu^{*\bo n}_j(r)-\frac{1}{\sqrt{2\pi }} \exp\paren{ - \frac{(r - m^{(\bo n)}_j)^2}{2\sigma^{(\bo n)}_j} }\right| \underset{\#\bo n\to\infty}{\longrightarrow}0.
\]
	Consequently, under Hypothesis \hyperref[descriptionCMGIndependence]{\bo{H1}} and \hyperref[descriptionCMGConvolution]{\bo{H2}}, for any integer $R>0$ fixed
\[
\sup_{\bo r\in [R]^d}\left|
 	\proba{\bo T_{\bo r} = \bo n}\prod_{j\in[d]}\sqrt{\sigma^{(\bo n)}_j}-\frac{\# \bo r}{\# \bo n}\frac{1}{(2\pi)^{d/2}} \exp\paren{ -\sum_{j\in[d]} \frac{(r_j + m^{(\bo n)}_j)^2}{2\sigma^{(\bo n)}_j} }\right| \underset{\#\bo n\to\infty}{\longrightarrow}0.
\]In particular, for fixed $\bo r$, we have
\begin{equation*}
	\proba{\bo T_{\bo r} = \bo n} = \Theta\paren{ \frac{1}{(\# \bo n)^{d/2 + 1}} }.
\end{equation*}
\end{lemma}

In Example~\ref{lemmaExplicitConstruction}, we provide an explicit construction of an offspring distribution and a sequence of sizes $\bo n$ that satisfy the condition $|-r_j - m^{(\bo n)}_j| \le c \sqrt{\# \bo n}$ for all $j \in [d]$.
The proof of Lemma~\ref{lemmaLLTTargetSums} follows directly from the local limit theorem for independent variables \cite{MR388499}, combined with the joint law governing the number of individuals of each type.
A detailed description of this joint law is provided in the following subsection.

\subsection{Law of the number of individuals of each type}

The proof of Theorem \ref{teoSimulationOfCMBGWIntro} is established by combining Theorem \ref{teoConstructionOFUniformMtypeForestWithGDS} with the derivation of the law of the number of individuals of each type $\mathbf{O}_{\mathbf{r}} = (\#_1 \mathcal{F}_{\mathbf{r}}, \ldots, \#_d \mathcal{F}_{\mathbf{r}})$. 
The latter provides a multitype generalization of the classical Otter--Dwass formula \cite{MR0030716, MR0253433} and Kemperman's formula \cite{MR0038045}, which are interesting on its own. 
It builds upon Theorem 1.2 in \cite{MR3449255}. 
This representation utilizes vectors $\mathfrak{e} = (\mathfrak{e}_1, \ldots, \mathfrak{e}_d) \in \mathbb{F}_d^{\text{elem.}}$ that encode \emph{elementary forests}. Specifically, each $\mathfrak{e}$ corresponds to a mapping $\mathfrak{e}: [d] \to [d]_0$ such that $\mathfrak{e}_j \neq j$ for all $j \in [d]$ (we refer the reader to Definition \ref{defiElementaryForest} for a formal characterization).

\begin{description}
	\item[H3]\label{descriptionCMGDeterministicOffDiagonal} For every $i,j\in [d]$, with $i\neq j$ there exists $C_{i,j}\in \na$ such that $X^{i,j}_{n_i}=C_{i,j}n_i$. 

    \item[H4]\label{descriptionCMGProportional} \textbf{Proportional Offspring:} There exist weights $w_1,\dots, w_d>0$ such that for all $i \neq j$:
	\begin{equation*}
		\esp{X^{i,j}_{n_i} \;\middle|\; \sum_{\ell=1}^d X^{\ell,j}_{n_\ell} = -r_j} = \frac{w_in_i}{\sum_{\ell=1}^d w_\ell n_\ell}(n_j-r_j).
	\end{equation*}
	(Note that \hyperref[descriptionCMGConvolution]{\bo{H2}} is a special case of \hyperref[descriptionCMGProportional]{\bo{H4}} when $w_i = 1$ for all $i$. This hypothesis naturally covers branching processes where the expected offspring rates factorize as $\lambda_{i,j} = w_i c_j$).

	\item[H5]\label{descriptionCMGProgressive} \textbf{Hierarchical (Triangular) Dependency:} The types can be ordered such that for every $i > j$, individuals of type $i$ almost surely cannot produce children of type $j$. This forces $X^{i,j}_{n_i} = 0$ almost surely for $i > j$, which models hierarchical or directional systems like irreversible mutations, progressive disease stages, or cell differentiation.
\end{description}

\begin{teo}[Law of the number of individuals of each type of a MBGW forest]\label{teoLawOfTheTotalPopulationByTypes}
	Consider an irreducible, non-degenerate, (sub)critical MBGW forest, with root-type $\bo{r} \in \mathbb{Z}^d_+ \setminus \{\mathbf{0}\}$, and consider $\bo{n}\in \mathbb{N}^d$ with $\bo{r}\leq \bo{n}$.
	Suppose that \hyperref[descriptionCMGIndependence]{\bo{H1}} is satisfied. 
Then the joint law is
	\begin{equation}\label{eqnGeneralDeterminantFormula}
		\bo P\paren{\bo{O}_{\bo{r}}=\bo n} =\proba{\bo T_{\bo r}=\bo n}= \frac{\det\paren{-\widehat{\bb k}(\bo{r}, \bo{n})}}{\prod_{i=1}^d n_i} \prod_{j=1}^d \proba{\sum_{\ell=1}^d X^{\ell,j}_{n_\ell} = -r_j}.
	\end{equation}
	
	This formula simplifies under specific structural hypotheses. If $\bb X$ satisfies \hyperref[descriptionCMGConvolution]{\bo{H2}}, then
	\begin{equation}\label{eqnOterDwassGeneralizationH2}
		\bo P\paren{\bo O_{\bo{r}}=\bo n}=\frac{\# \bo r}{\# \bo n}\prod_{j=1}^d\proba{\sum_{\ell=1}^d X^{\ell,j}_{n_\ell}=-r_j}.
	\end{equation}
	If $\bb X$ satisfies \hyperref[descriptionCMGDeterministicOffDiagonal]{\bo{H3}}, we trivially recover the exact determinant of $\bb C(\bo{r,n})$:
	\begin{equation}\label{eqnDriftOutsideDiagonalLevyField}
		\bo P\paren{\bo{O}_{\bo{r}}=\bo n} =\frac{\det\paren{-\bb C(\bo{r,n})}}{\prod_{i=1}^d n_i}\prod_{j=1}^d \proba{\sum_{\ell=1}^d X^{\ell,j}_{n_\ell}=-r_j},
	\end{equation}
	where $\bb C(\bo{r,n})$ has off-diagonal entries $C_{i,j} n_i$ and diagonal entries $-r_j - \sum_{k \neq j} C_{k,j} n_k$.
	
	Furthermore, if $\bb X$ satisfies \hyperref[descriptionCMGProportional]{\bo{H4}}, the matrix of conditional expectations admits a rank-1 update structure. By the Matrix Determinant Lemma, the determinant simplifies algebraically into a strict closed form:
	\begin{equation}\label{eqnRankOneSimplification}
	    \bo P\paren{\bo{O}_{\bo{r}}=\bo n} = \frac{\sum_{\ell=1}^d w_\ell r_\ell}{\sum_{\ell=1}^d w_\ell n_\ell} \prod_{j=1}^d \proba{\sum_{\ell=1}^d X^{\ell,j}_{n_\ell}=-r_j}.
	\end{equation}
	
	If instead $\bb X$ satisfies \hyperref[descriptionCMGProgressive]{\bo{H5}}, the matrix $-\widehat{\bb k}(\bo{r,n})$ is upper triangular, and the determinant collapses completely to the product of its diagonal terms:
	\begin{equation}\label{eqnDAGSimplification}
	    \bo P\paren{\bo{O}_{\bo{r}}=\bo n} = \frac{1}{\prod_{i=1}^d n_i} \prod_{j=1}^d \left( r_j + \sum_{i < j} \widehat{k}_{i,j}(\bo{n}, \bo{r}) \right) \proba{\sum_{\ell=1}^d X^{\ell,j}_{n_\ell}=-r_j}.
	\end{equation}
\end{teo}

Observe that in the single-type case ($d=1$), formulas \eqref{eqnGeneralDeterminantFormula} in \eqref{eqnOterDwassGeneralizationH2} coincide with the classical Otter--Dwass formula. 
The importance of \eqref{eqnOterDwassGeneralizationH2} is that it allows us to obtain explicit computations. 

For some other results concerning the joint distribution of the total number of individuals by type, we refer the reader to \cite{MR3285942}, \cite[Lemma 3.8]{MR3803914}, and \cite[Lemma 4.7]{MR4586227}.

As a simple corollary, we obtain the following two results. 

Let $\mathbb{B}_{\bo n, \bo r}$ be  the set of all multidimensional paths of length $\bo n$ such that $\Xi_j = n_j$ for all $j \in [d]$, as defined in Step (3) of Algorithm \ref{algorithm8}. 
That is,  $\mathbb{B}_{\bo n, \bo r}$ is the set of all multidimensional bridges of size $\bo n$, such that its degree sequence satisfies $r_j+\sum_i\sum_k k N_{i,j}(k)=n_j$.
Consider $F$ with domain the set of multidimensional paths in $S_d$ of length $\bo n$, with values in $\mathbb{R}$, and invariant under cyclic shifts. 
We write $F_{\bo n}(\bb X):=F(X^{i,j}_{\ell_i};\ell_i\leq n_i,i\in [d])$. 
Now we relate the law of the multidimensional first-hitting times with bridges.
It is an extension of classical results (see \cite{MR3651047}, Lemma 11).

\begin{coro}[Decoupling the determinant]\label{coroRelatingFirstHittingTimeAndBridges}
If $\bb X$ is a random walk with law as in \eqref{eqnDefinitionLawOfXToi}, such that $\bo n$ is a solution of $(\bo r,\bb X)$ with positive probability, then 
\[
\E\paren{F_{\bo n}(\bb X);\bo T_{\bo r}=\bo n}=\E\paren{F_{\bo n}(\bb X)\frac{{\rm det}(-\bb k(\bb X))}{\prod n_i};\bb X\in \bb B_{\bo n,\bo r}},
\]where $\bb k(\bb X)$ is constructed as $k_{i,j}=\sum k N_{i,j}(k)-n_i\indi{i=j}$, using the degree sequence $(N_{i,j})$ of $\bb X$. 
Moreover, assume the branching process with offspring distribution $\bm \mu$ is irreducible, non-degenerate, (sub)critical MBGW forest, and that \hyperref[descriptionCMGIndependence]{\bo{H1}} is satisfied. Then
\[
\E\paren{\frac{{\rm det}(-\bb k(\bb X))}{\prod n_i};\bb X\in \bb B_{\bo n,\bo r}}=\frac{\det\paren{-\widehat{\bb k}(\bo{r}, \bo{n})}}{\prod_{i=1}^d n_i} \prod_{j=1}^d \proba{\sum_{\ell=1}^d X^{\ell,j}_{n_\ell} = -r_j}.
\]
\end{coro}
In the continuous setting, Chaumont and Marine proved in Eqn. (4.22) of \cite{MR4193902}, a formula similar to the LHS
\[
\proba{\bo T_{\bo r}\in dt}=\int_{\re^{d(d-1)}_+}\frac{{\rm det}(-(\overline{\bb x}+\bo rI_d))}{\prod t_i}p_{\bo t}(\bb x^{\bo r})\prod_{k\neq j}{\rm d}x_{k,j}{\rm d}\bo t. 
\]In a joint work with Sandra Palau \cite{AngtuncioPalauConvergenceMBGWProcess2026}, we investigate another way to express the law of the (continuous) multidimensional first-hitting time $\bo T_{\bo r}$. 

\begin{coro}[Total Population of a MBGW Forest]\label{coroLawOfTheTotalPopulationInAMBGW}
	Assume the hypotheses of Theorem \ref{teoLawOfTheTotalPopulationByTypes} are satisfied, together with \hyperref[descriptionCMGIndependence]{\bo{H1}} and \hyperref[descriptionCMGConvolution]{\bo{H2}}.
	Fix $n\geq d$ and let $\bo r\in \mathbb{N}^d$ such that $\#\bo r\leq n$. Suppose that for each $j \in [d]$, there exists a sequence of probability mass functions $f_{n}^{(j)}$ (depending only on $n$ and $j$) such that
    $\proba{\sum_{\ell=1}^d X^{\ell,j}_{n_\ell} = -r_j} = f_n^{(j)}(n_j - r_j)$, for all configurations $(n_1,\ldots, n_d)\in\mathbb{N}^d$ with $\sum_\ell n_\ell=n$. 
    Let $Y_n^{(1)}, \dots, Y_n^{(d)}$ be independent random variables taking values in $\mathbb{Z}_+$ with distributions given by $f_n^{(1)}, \dots, f_n^{(d)}$, respectively.
	Let $\#\bo O_{\bo r}=\sum_{\ell\in [d]}O_\ell$ be the total size of the forest.
	Then
	\begin{align*}
	\bo P\paren{\#\bo O_{\bo r}=n} = \frac{\#\bo r}{ n}\proba{\sum_{j=1}^d Y^{(j)}_{n} =  n -  \#\bo r}.
	\end{align*}
\end{coro}

\subsection{Enumerations of some combinatorial multitype forests}
Finally, we apply Theorems \ref{teoSimulationOfCMBGWIntro} and \ref{teoLawOfTheTotalPopulationByTypes} to provide enumerations of plane, labeled, and binary multitype forests with fixed root-type and individuals-type. 
All of the following results satisfy \hyperref[descriptionCMGConvolution]{\bo{H2}}, as we will see later. 
For alternative enumerative results concerning multitype labeled forests, we refer the reader to \cite{MR2359513}, Theorems 5.49 and 5.54.

For the next results, we denote a MBGW forest with root-type $\bo r$ by $\mathcal{G}_{\bo{r},\bo p}$, $\mathcal{P}_{{\bf r},\bm \mu}$ or $\mathcal{B}_{{\bf r},\bo p}$, if independently for each individual type $i$, the offspring distribution is respectively
\begin{enumerate}
	\item geometric with parameter $\bo p=(p_1,\ldots, p_d)\in (0,1]^d$, that is, $\mu_i(k_1,\ldots, k_d)=\prod_j p_j(1-p_j)^{k_j}$ for $k_j\geq 0$,
	\item Poisson with parameters $\bm\mu=(\mu_1,\ldots,\mu_d)\in\re^d_+$, that is, $\mu_i(k_1,\ldots, k_d)=\prod_j e^{-\mu_j}\mu_j^{k_j}/k_j!$ for $k_j\geq 0$, 
	\item Bernoulli with parameter $\bo p=(p_1,\ldots, p_d)\in [0,1]^d$, that is, $\mu_i(k_1,\ldots,k_d)=\prod_j p_j^{k_j/2}(1-p_j)^{1-k_j/2}$ for $k_j\in \{0,2\}$.
\end{enumerate}
Define by $\mathbb{F}^{plane}_{\bo{r,n}}$, $\mathbb{F}^{labeled}_{\bo{r,n}}$ and $\mathbb{F}^{binary}_{\bo{r,n}}$,\label{notationCombinatorialMtypeForests} the set of $d$-type plane, labeled and binary forests having root-type $\bo{r}$ and individuals-type $\bo{n}$. 
Using Theorem \ref{teoLawOfTheTotalPopulationByTypes}, we give in Subsection \ref{subsectionExamplesMultitypeGeoPoiBer} three examples of distributions where the law of a CMBGW$(\bo n,\bo r)$ forest can be computed explicitly. 
This generalizes the constructions given in \cite{MR1630413} (cf. \cite{MR3157803}).

\begin{propo}[Uniform multitype plane forests on $\mathbb{F}^{plane}_{\bo{n,r}}$]\label{propoH2Geometric}
	If $\sum_{j\in [d]} (1-p_j)/p_j\leq 1$, we have
	\begin{equation*}
		\proba{\mathcal{G}_{{\bf r},\bo p}=\bo f|\#\mathcal{G}_{{\bf r},\bo p}=\bo n}=\frac{1}{\frac{\# \bo r}{\# \bo n}\prod_{i\in [d]}{\# \bo n+n_i-r_i-1\choose n_i-r_i}}\ \ \ \ \ \ \forall\,\bo f\in \mathbb{F}^{plane}_{\bo{n,r}}.
	\end{equation*}
\end{propo}

\begin{propo}[Uniform multitype labeled forests on $\mathbb{F}^{labeled}_{\bo{n,r}}$]\label{propoH2Poisson}
	If $\sum_{j\in [d]} \mu_i\leq 1$ and  $\mathcal{P}^*_{\bo{n,r}}$ is $\mathcal{P}_{\bo{n,r}}$ relabeled by $d$ uniform random permutations, one for each type, then
	\begin{equation*}
		\proba{\mathcal{P}_{{\bf r},\bm \mu}^*=\bo f|\#\mathcal{P}_{{\bf r},\bm \mu }=\bo n}=\frac{1}{\frac{\# \bo r}{\# \bo n}\big(\# \bo n\big)^{\# \bo n-\# \bo r}}\ \ \ \ \ \ \forall\,\bo f\in \mathbb{F}^{labeled}_{\bo{n,r}}.
	\end{equation*}
\end{propo}The previous random relabeling of the plane forest  is used to obtain a uniform forest on the set of labeled forests.

\begin{propo}[Uniform multitype binary forests on $\mathbb{F}^{binary}_{\bo{n,r}}$]\label{propoH2Binomial}
Assume $n_i-r_i$ is an even number for every $i\in [d]$ and that $2\sum_{j\in [d]} p_j\leq 1$.
	Then
	\begin{equation*}
		\proba{\mathcal{B}_{{\bf r},\bo p}=\bo f|\#\mathcal{B}_{{\bf r},\bo p}=\bo n}=\frac{1}{\frac{\# \bo r}{\# \bo n}\prod {\# \bo n \choose (n_i-r_i)/2}} \ \ \ \ \ \ \forall\,\bo f\in \mathbb{F}^{binary}_{\bo{n,r}}.
	\end{equation*}
\end{propo}
As a simple corollary of the previous propositions, the denominators in the three formulas, provide us with the number of $d$-type plane, labeled, and binary forest,  with root-type $\bo{r}$ and individuals-type $\bo{n}$, respectively.

The paper is organized as follows: in Section  \ref{sectionConstructionOfMFGDS} we construct MFGDS and prove Theorem \ref{teoConstructionOFUniformMtypeForestWithGDS}. 
Section \ref{sectionLawOfNumberOfIndividualsByTypes} is devoted to prove the joint law of the number of individuals by types in a MBGW forest, which is Theorem \ref{teoLawOfTheTotalPopulationByTypes}.
In that section we also obtain in Corollary \ref{coroLawOfTheTotalPopulationInAMBGW}, the law of the total population in a MBGW forest.
Examples satisfying the hypotheses of Theorem \ref{teoLawOfTheTotalPopulationByTypes} are given in Subsection \ref{subsectionExamplesMultitypeGeoPoiBer}.
In Section \ref{secRelationMFGDScMBGW} we prove that under an independence assumption, the CMBGW forests are mixtures of MFGDS. 
Our main result Theorem \ref{teoSimulationOfCMBGWIntro} and the proof that Algorithm \ref{algorithm8} provides the law of a CMBGW forest, is given in Section \ref{sectionAlgorithmsConditionedRandomForests}.
Finally, the expected complexity time Thoerem \ref{teoIntroComplexityTime} is proved in Section \ref{sectionExpectedComplexity}, where we also show the local limit theorem for multidimensional first-hitting times.

\section{Multitype random forests with a given degree sequence}\label{sectionConstructionOfMFGDS}

Now we are ready to construct a forest taken uniformly at random from $\mathbb{F}_{\bb s,\bo r}$.

\begin{proof}[Proof of Theorem \ref{teoConstructionOFUniformMtypeForestWithGDS}]\label{proofteoConstructionOFUniformMtypeForestWithGDS}
Recall that from a given multitype degree sequence as defined in \ref{defiMultivariateDegreeSeq}, we can construct bridges $\bb w^b$ taking values on $S_d$ as in  \eqref{eqnBridgesFromAMultitypeDegreeSequence}.
Let us prove that the system of equations $(\bo{r},\bb w^b)$ admits $\bo{n}$ as a solution, and that $w^b_{i,i}(n_i)\neq 0$ for every $i\in [d]$.
The latter is needed to apply Lemma 3.3 in \cite{MR3449255}.

The length $\bo n$ is a solution, since
	\begin{equation*}
		r_j+\sum_{i=1}^d w^b_{i,j}(n_i)=r_j-n_j+\sum_{i=1}^d\sum_k k n_{i,j}(k)=0\ \ \ \ \ \ \ \ \forall\, j\in [d].
	\end{equation*}Using the definition, note that 
	\begin{equation*}
		-k_{j,j}=n_j-\sum_k k n_{j,j}(k)=r_j+\sum_i\sum_k k n_{i,j}(k)-\sum_k k n_{j,j}(k)=r_j+\sum_{i \neq j}k_{i,j}.
	\end{equation*}Thus, whenever $n_j>0$ we have $-k_{j,j}=-w^b_{j,j}(n_j)>0$.
	Indeed, if $-k_{j,j}=0$, from the above display we have $r_j=0$ and $k_{i,j}=0$ for any $i\neq j$. 
	This means, there are no roots type $j$ and no individual type $i\neq j$ gives birth to type $j$ individuals, and this implies $n_j=0$.

The conditions on the multitype degree sequence $\bo s$ that there is at least one individual of each type and that the determinant is positive, are imposed to conclude that $\bo s$ is the multitype degree sequence of some forest. 

The above and the Multivariate Cyclic Lemma (Lemma 3.3 in \cite{MR3449255}), implies that from $\bb s$ there are $det(-\bb k)$ forests that can be associated to it, since any good cyclical permutation codes a forest.
	
	Now, define $\bo{s}_{i,j}=(n_{i,j}(k);k\geq 0)$, and write
	\begin{equation*}
		{n_i \choose \bo{s}_{i,j}}:={n_i \choose n_{i,j}(0),n_{i,j}(1),\ldots}.
	\end{equation*}
	
	Fix any bridge $\bb w^b\in \mathbb{B}_{\bb s, \bo r}$.
	From the possible $\prod_i (n_i!)^d$ values taken by the random permutations $(\bo{\pi}_{i,j},i,j\in[d])$, exactly $\prod_j\prod_i\prod_k n_{i,j}(k)!$ form the bridge $\bb w^b$. 
	This is true since, permuting the labels of the $n_{i,j}(k)$ individuals type $i$ having $k$ children type $j$, we obtain the same bridge. 
	This proves the assertion since this is true for every $i,j,k$.
	Therefore
	\begin{equation*}
		\p\paren{\bb W^b=\bb w^b}=\frac{1}{\prod\prod {n_i\choose \bo{s}_{i,j}}}.
	\end{equation*}Now, fix any $i\in [d]$ and $\bb w\in \mathbb{E}_{\bb s, \bo r}$, the set of BFWs coding all forests with multitype degree sequence $\bb s$.
	We now obtain the number of different pairs $(\bb w^b,u)\in \mathbb{B}_{\bb s, \bo r}\times [det(-\bb k)]$ that can be mapped to $\bb w$ using the multidimensional Vervaat transform, as defined in \ref{defiMultidimensionalVervaatTransform}.
	Note that such bridges can only be of the form $\theta_{\bo{q,n}}(\bb w)$, that is, cyclical permutations of $\bb w$.
	If $\bo w^{(i)}$ is the $i$-th row of $\bb w$, by Lemma \ref{lemmaPairsYUMappedToW} in the Appendix, the number of pairs $(\theta_j(\bo w^{(i)}),u)$ that can be mapped to $\bo w^{(i)}$ are exactly $n_i$.
	This being true for every $i$ implies there are $\prod n_i$ unique pairs $(\theta_{\bf q,n}(\bb w),u)$ such that $V((\theta_{\bf q,n}(\bb w),u))=\bb w$.
	Denote such pairs as
	\begin{equation*}
		A(\bb w)=\left\{(\bb w^b_k,u_k)\in \mathbb{B}_{\bb s, \bo r}\times [\det(-\bb k)] :V((\bb w^b_k,u_k))=\bb w,\ k\in \left[ \prod n_i\right]\right\}.
	\end{equation*}

	This implies
	\begin{align*}
		\p\paren{V(\bb W^b,U)=\bb w}& =\p\paren{ (\bb W^b,U)\in A(\bb w)}\\
		& = \sum_{k\in \left[ \prod n_i\right] }\p\paren{ (\bb W^b,U)=  (\bb w^b_k,u_k)}\\
		& =\sum_{k\in \left[ \prod n_i\right] }\frac{1}{\prod\prod {n_i\choose \bo{s}_{i,j}}}\frac{1}{\det(-\bb k)}\\
		& =\frac{1}{\frac{\det(-\bb k)}{\prod n_i}\prod\prod {n_i\choose \bo{s}_{i,j}}}. 
	\end{align*}This concludes the proof since the right-hand side is independent of $\bb w$, so $V(\bb W^b,U)$ is uniform.
\end{proof}

\begin{remark}
	From this lemma we can conclude that the set of plane forests with degree sequence $\bb{s}$ having root-type $\bo{r}$ is
	\begin{equation*}
		|\mathbb{F}_{\bb s, \bo r}|=\frac{\det(-\bb k)}{\prod n_i}\prod\prod {n_i\choose \bo{s}_{i,j}}.
	\end{equation*}
\end{remark}

In our following Algorithm \ref{algorithmUMFGDS}, we use the multidimensional Vervaat transform as defined in page \pageref{defiMultidimensionalVervaatTransform}, to simulate a MFGDS.

\begin{algorithm}
	\caption{Generate uniformly sampled multitype forests with a given degree sequence}
	\label{algorithmUMFGDS}
	\begin{algorithmic}[1]
		\STATEx {\bf Input: }A degree sequence $\bo{ s}_{i,j}=(n_{i,j}(k);k\in [m_{i,j}])$ satisfying $n_j=\sum_k n_{i,j}(k)$ for every $i,j$, and $n_j=r_j+\sum_i\sum_kk n_{i,j}(k)$, for every $j$
		
		\STATEx {\bf Output: }A uniformly sampled multitype forest with multitype degree sequence $\bb s$
		\STATE Generate the vectors $\bo{c}_{i,j}=(c_{i,j}(1),c_{i,j}(2),\ldots,c_{i,j}(n_i))$, with $n_{i,j}(0)$ zeros, $n_{i,j}(1)$ ones, etc., ordered in non-decreasing order $c_{i,j}(k)\leq c_{i,j}(k+1)$ 
		\STATE Generate $\bm \pi_{i,j}=(\pi_{i,j}(1),\ldots, \pi_{i,j}(n_i))$, a uniform random permutation of $[n_i]$, everything independent
		\STATE Define $\bb W^b=(W^b_{i,j};i,j\in [d])$, where
		\begin{esn}
			W^b_{i,j}(k)=\sum_{\ell=1}^k(c_{i,j}\circ \pi_{i,j}(\ell)-\indi{i=j}),\ k\in [n_i]
		\end{esn}satisfying $W^b_{i,i}(0)=0$ and $W^b_{i,i}(n_i)=-k_i$
		\STATE Generate an independent uniform random variable $U$ on $[det (-\bb k)]$, where $k_{i,j}:=\sum k n_{i,j}(k)-n_i\indi{i=j}$
		\STATE Construct the multidimensional Vervaat transform $V(\bb W^b,U)$ of $\bb W^b$ 
		\STATE Generate the multitype forest with breadth-first walk $V(\bb W^b,U)$
	\end{algorithmic}
\end{algorithm}

\section{Law of the number of individuals by types of a MBGW forest}\label{sectionLawOfNumberOfIndividualsByTypes}

In this section we obtain the joint law of $\bo O_{\bo r}=(O_1,\ldots, O_d)$, which codes the number of individuals of each type in a MBGW forest. 
We rely on the following theorem.

\begin{teo}[Theorem 1.2, \cite{MR3449255}]\label{teoChaumontTotalPopulationByTypesandParents}
	Let $Z$ be a $d$-type branching process, which is irreducible, non-degenerate and (sub)critical. 
	For $i\in [d]$, let $O_i$ be the total number of individuals of type $i$, up to the extinction time $T$, and for $i\neq j$, let $A_{i,j}$ be the total number of individuals of type $j$ whose parent is of type $i$, up to time $T$.
	Then, for all integers $r_i$, $n_i$, $k_{i,j}$ such that $r_i\geq 0$ with $\bo{r}>0$, $k_{i,j}\geq 0$ for $i\neq j$, $-k_{j,j}=r_j+\sum_{i\neq j}k_{i,j}$, and $n_i\geq -k_{i,i}$, we have
	\begin{equation}\label{eqnPopulationByTypesAndParents}
	\begin{split}
	& \bo P\paren{O_i=n_i,i\in [d],A_{i,j}=k_{i,j}, i,j\in [d],i\neq j}\\
	& = \frac{det(-\bb k)}{\bar{n}_1\cdots \bar{n}_d}\prod_1^d\mu_i^{*n_i}\paren{k_{i1},\ldots ,k_{i(i-1)},n_i+k_{i,i},k_{i(i+1)},\ldots, k_{id}}, 
	\end{split}\end{equation}
	where ${{\bf r}}=(r_1,\ldots, r_d)$, $\bar{n}_i=n_i\vee 1$ and $(-\bb k)_{i,j\in [d]}$ is the matrix with entries $(-k_{i,j})$ to which we remove row $i$ and column $i$, for every $i$ such that $n_i=0$. 
\end{teo}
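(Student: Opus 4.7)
The plan is to encode the MGW forest by its breadth-first walk (BFW) and reduce the event to a path-counting problem that can be attacked via the Multivariate Cyclic Lemma \ref{lemmaNumberOfGoodCyclicalPermutations}.

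First I would list, inside each subforest $F^{(i)}$ and in breadth-first order, the type-$i$ individuals $u_1^{(i)},u_2^{(i)},\ldots$ together with their offspring vectors $Z_k^{(i)}=(p_1(u_k^{(i)}),\ldots,p_d(u_k^{(i)}))\in\z_+^d$. The branching property makes $(Z_k^{(i)})_{k,i}$ a family of independent vectors with $Z_k^{(i)}\sim\nu_i$, and the BFW coding of \eqref{eqnChainsEncodingAMultitypeTree} reads
\[X^{i,j}(n)=\sum_{k=1}^{n}Z_k^{(i),j}-n\indi{i=j}.\]
On the event $\set{O_i=n_i,\,A_{ij}=k_{ij}\text{ for }i\neq j}$, each chain $X^{(i)}$ has length exactly $n_i$ with terminal value $X^{i,j}(n_i)=k_{ij}$ for $i\neq j$ and $X^{i,i}(n_i)=k_{ii}$ (noting $\sum_k Z_k^{(i),i}=A_{ii}=n_i+k_{ii}$). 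The further requirement that $n_i$ be the true extinction time, and not a value hit accidentally after earlier extinction, is equivalent in the sense of \eqref{eqnSystemRXOfMultytipeSequencesHasASolution} to $\bo{n}$ being the \emph{smallest} solution of the system $(\bo{r},X)$, i.e.\ to $X$ being a \emph{good} path.

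Second, I would compute the probability by summing over all candidate increment sequences. By independence, a given realization $(z_k^{(i)})_{i,k}$ carries weight $\prod_{i,k}\nu_i(z_k^{(i)})$, so summing over all sequences with the prescribed marginal sums (but without the good-path restriction) yields
\[\prod_{i=1}^d\nu_i^{*n_i}\paren{k_{i1},\ldots,k_{i(i-1)},\,n_i+k_{ii},\,k_{i(i+1)},\ldots,k_{id}}.\]
I would then group these sequences into orbits under the cyclic-shift action $\theta_{\bo{q},\bo{n}}$: each orbit consists of $n_1\cdots n_d$ sequences generically, all carrying the same weight thanks to exchangeability, and by Lemma \ref{lemmaNumberOfGoodCyclicalPermutations} exactly $\det(-k_{ij})$ of them are good paths. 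Consequently the constrained sum equals the unconstrained sum multiplied by the ratio $\det(-k_{ij})/(\bar{n}_1\cdots\bar{n}_d)$, which produces the announced formula.

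The main obstacle is this orbit-counting step. In orbits where the $\z/n_1\times\cdots\times\z/n_d$-action is not free — i.e.\ when a sequence has nontrivial period under some cyclic shift — the orbit size is strictly less than $n_1\cdots n_d$, and one must verify that the number of good shifts drops by the same factor so the ratio $\det(-k_{ij})/(\bar{n}_1\cdots\bar{n}_d)$ survives the summation. A secondary technicality is the degenerate case $n_i=0$ for some $i$: the $i$th subforest is empty, the corresponding factor $\nu_i^{*0}$ must be interpreted as $\delta_0$, and the $i$th row and column of the matrix $(-k_{ij})$ are removed; this is precisely the role of the convention $\bar{n}_i=n_i\vee 1$ together with the matrix truncation in the statement. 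Handling both bookkeeping issues cleanly is the technical core of the proof.
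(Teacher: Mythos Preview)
This theorem is quoted from \cite{MR3449255} and is not proved in the present paper; the paper only uses it as input. That said, the computation you outline does appear verbatim in the paper's Appendix (inside the proof of Lemma~\ref{lemmaConvergenceEmpiricalDegreeSequenceOfCMGWTree}, with $F\equiv 1$), and it coincides with the original argument of Chaumont--Liu: encode by the BFW, identify the event $\{O_i=n_i,\,A_{ij}=k_{ij}\}$ with $\{X\in B_{\bo{k,n}},\ T(X)=\bo{n}\}$, and use the Multivariate Cyclic Lemma to peel off the good-path constraint.

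Your ``main obstacle'' about non-free orbits is not actually an obstacle, and this is worth internalizing. The clean formulation is probabilistic rather than orbit-combinatorial: since the increment law is invariant under each $\theta_{\bo{q},\bo{n}}$, one has $\p_{\bo{r}}(T(X)=\bo{n},\,X\in B_{\bo{k,n}})=\p_{\bo{r}}(T(\theta_{\bo{q},\bo{n}}X)=\bo{n},\,X\in B_{\bo{k,n}})$ for every $\bo{q}$; averaging over all $\prod n_i$ values of $\bo{q}$ yields
\[
\p_{\bo{r}}\!\bigl(T(X)=\bo{n},\,X\in B_{\bo{k,n}}\bigr)=\frac{1}{\prod n_i}\,\E_{\bo{r}}\!\Bigl[\indi{X\in B_{\bo{k,n}}}\sum_{\bo{q}}\indi{T(\theta_{\bo{q},\bo{n}}X)=\bo{n}}\Bigr],
\]
and Lemma~\ref{lemmaNumberOfGoodCyclicalPermutations} says the inner sum equals $\det(-k_{ij})$ \emph{deterministically} on $B_{\bo{k,n}}$, with no genericity assumption. (Even at the combinatorial level your worry dissolves: if an orbit has stabilizer of size $m$, both the orbit size and the number of good elements drop by the same factor $m$, so the ratio $\det(-k_{ij})/\prod n_i$ survives.) The degenerate-coordinate bookkeeping for $n_i=0$ is exactly as you describe.
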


Let us give a hint on how to derive the law of the population by types for a 2-type BGW forest, having root-type $\bo{r}$ and individuals-type $\bo{n}$.
Recall the hypothesis \hyperref[descriptionCMGIndependence]{\bo{H1}} about the independence in the components of $\mu_i$ of Theorem \ref{teoLawOfTheTotalPopulationByTypes}.
Recalling the definition of $(X^{i,j},i,j\in [d])$ in Equation \eqref{eqnDefinitionLawOfXToi}, from Theorem \ref{teoChaumontTotalPopulationByTypesandParents}, summing over all the possible values of $\bb k$ we have 
\begin{equation}\label{eqnTotalProbabilityForTwoTypes}
\begin{split}
& \bo P\paren{\bo O_{\bo r}=\bo n}\\
& = \sum_{i=0}^{n_1-r_1}\sum_{j=0}^{n_2-r_2}\frac{r_1r_2+r_1j+r_2i}{n_1n_2}\proba{(X_{n_1}^{1,1},X_{n_1}^{1,2})=\paren{-r_1-i,j}}\proba{(X_{n_2}^{2,1},X_{n_2}^{2,2})=\paren{i,-r_2-j}}\\
& = \sum_{i=0}^{n_1-r_1}\sum_{j=0}^{n_2-r_2}\frac{r_1r_2+r_1j+r_2i}{n_1n_2}\proba{X_{n_1}^{1,1}=-r_1-i}\proba{X_{n_1}^{1,2}=j}\proba{X_{n_2}^{2,1}=i}\proba{X_{n_2}^{2,2}=-r_2-j}.
\end{split}
\end{equation}We perform each summation in \emph{columns}, obtaining three terms of the form
\begin{equation}\label{eqnExampleChangeSummationOverColumns}
	\frac{1}{n_1n_2}\sum_{i=0}^{n_1-r_1}k_{\ell_1,1}\proba{X_{n_1}^{1,1}=-r_1-i}\proba{X_{n_2}^{2,1}=i}
	\sum_{j=0}^{n_2-r_2}k_{\ell_2,2}\proba{X_{n_1}^{1,2}=j}\proba{X_{n_2}^{2,2}=-r_2-j},
\end{equation}where $k_{\ell_1,1}\in \{r_1,i\}$ and $k_{\ell_2,2}\in \{r_2,j\}$. 
Each of the above sums can be written as convolutions, either multiplied by a constant or a random variable (depending on the value of $k_{i_j,j}$).
Is precisely the second case where we use Hypotheses \hyperref[descriptionCMGConvolution]{\bo{H2}}. 
First, we describe explicitly $det(-\bb k)$. 

\begin{defi}\label{defiElementaryForest}
	An \emph{elementary forest} is a multitype forest that contains exactly one vertex of each type. 
	In particular, each elementary forest contains exactly $d$ vertices and is coded by the $d$ couples $(\mathfrak{e}_j,j)$ for $j\in [d]$, where $\mathfrak{e}_j$ is the type of the parent of vertex type $j$. 
	If the vertex of type $j$ is a root, then we set $\mathfrak{e}_j=0$.
	We define the set $\bb F_d^{\text{elem.}}$ of vectors $\mathfrak{e}=(\mathfrak{e}_1,\ldots, \mathfrak{e}_d)$, with $0\leq \mathfrak{e}_j\leq d$ such that $(\mathfrak{e}_j,j),\ i\in [d]$ codes an elementary forest.
\end{defi}

Recall Definition \ref{defiCodifyingSequencesOfMultitypeForestsWithR} of $S^{\bo r}_d$ of coding sequences of multitype forests.
Define $\bb M(\z)$ as the set of matrices with $d\times d$ entries, all in $\z$. 
In the following, 
we write $k_{\mathfrak{e}_j,j}$ instead of $k_{\mathfrak{e}_jj}$.

\begin{lemma}[Lemma 4.5 in \cite{MR3449255}]\label{lemmaNumberOfGoodCyclicalPerm}
Let $\bo r\in \z^d_+$ and $\bb k=(k_{i,j})\in \bb M(\z)$ satisfying $r_j+ (\bo 1\cdot \bb k)_j=0$ for every $j\in [d]$. Then, defining $k_{0,j}:=r_j$ we have
\begin{equation*}
	\det(-\bb k)=\sum_{\mathfrak{e}\in \bb F_d^{\text{elem.}}}\prod_{j=1}^{d}k_{\mathfrak{e}_j,j}.
\end{equation*}
\end{lemma}

\begin{proof}[Proof of Theorem \ref{teoLawOfTheTotalPopulationByTypes}]\label{proofteoLawOfTheTotalPopulationByTypes}
Define the index sets
\begin{equation*}
	I(\bo{r,n})=\{ \bb k=(k_{i,j})_{i,j\in[d]}:k_{i,j}\geq 0\mbox{ for }i\neq j,0\leq -k_{j,j}\leq n_j,0=r_j+(\bo 1\cdot \bb k)_j,\forall j\in [d] \},
\end{equation*}and
\begin{equation*}
	I'(\bo{r,n})=\{ (k_{i,j})_{i,j\in [d],\ i\neq j}:k_{i,j}\geq 0\mbox{ for }i\neq j,0\leq r_j+\sum_{i\neq j}k_{i,j}\leq n_j,\forall j\in [d] \},
\end{equation*}and use the notation $\sum_{\bb k\in I(\bo{r,n})}$ (resp. $\sum_{(k_{i,j})\in I'(\bo{r,n})}$) to denote the summation over all matrices $\bb k=(k_{i,j})$ with $i,j\in [d]$, such that $\bb k\in I(\bo{r,n})$ (resp. all values $(k_{i,j})$ with $i\neq j$ such that $(k_{i,j})\in I'(\bo{r,n})$).

It is clear that
\[
\bo P\paren{\bo O_{\bo{r}}=\bo n} =\sum_{(k_{i,j})\in I'(\bo{r,n})}\bo P\paren{\bo O_{\bo{r}}=\bo n,A_{i,j}=k_{i,j},\ i,j\in[d],i\neq j}.
\]Indeed, by definition of $(A_{i,j})$, we have $A_{i,j}\geq 0$, and when $n_j<r_j+\sum_{i\neq j}k_{i,j}$, then there are at least $r_j+\sum_{i\neq j}k_{i,j}$ individuals type $j$ which contradicts $O_j=n_j$.

By the independence imposed on $\mu_j$ (Hypothesis \hyperref[descriptionCMGIndependence]{\bo{H1}}), the product in Equation \eqref{eqnPopulationByTypesAndParents} separates into
\begin{align*}
	P(\bb k)&:=\prod_1^d\mu_j^{*n_j}\paren{k_{j1},\ldots ,k_{j(j-1)},n_j+k_{j,j},k_{j(j+1)},\ldots, k_{jd}}\\
	&=\prod_{j=1}^d \proba{X^{j,j}_{n_j}= -r_j-\sum_{i\neq j}k_{i,j}}\prod_{i\neq j}\proba{X^{i,j}_{n_i}=k_{i,j}}\\
	&=\prod_{j=1}^d \mu^{*n_j}_{j,j}\paren{-r_j-\sum_{i\neq j}k_{i,j}}\prod_{i\neq j}\mu^{*n_i}_{i,j}\paren{k_{i,j}},
\end{align*}where $\mu^{*n_i}_{i,j}$ is the law of $X^{i,j}_{n_i}$.

Hence, using Theorem \ref{teoChaumontTotalPopulationByTypesandParents} for each $\bb k\in I(\bo{r},\bo{n})$, we need to compute
\begin{align*}
\bo P\paren{\bo O_{\bo{r}}=\bo n}& =\frac{1}{\prod n_i}\sum_{\bb k\in I(\bo{r,n})}det(-\bb k)P(\bb k).
\end{align*}
Since on the summation we consider $\bb k'$s with $0=r_j+(\bo 1\cdot \bb k)_{j}$, then the conditions of Lemma \ref{lemmaNumberOfGoodCyclicalPerm} are satisfied. 
Thus using $k_{0j}=r_j$, we have
\begin{align*}
	\bo P\paren{\bo O_{\bo{r}}=\bo n}&	=\frac{1}{\prod n_i}\sum_{\mathfrak{e}\in \bb F_d^{\text{elem.}}} \sum_{\bb k\in I(\bo{r,n})}P(\bb k)\prod_{j=1}^dk_{\mathfrak{e}_j,j} \\
	& =\frac{1}{\prod n_i}\sum_{\mathfrak{e}\in \bb F_d^{\text{elem.}}}\sum_{(k_{i,j})\in I'(\bo{r,n})}\prod_{j=1}^dk_{\mathfrak{e}_j,j} \mu^{*n_j}_{j,j}(-r_j-\sum_{i\neq j}k_{i,j})\prod_{i\neq j}\mu^{*n_i}_{i,j}(k_{i,j}).
\end{align*}We perform the summation in columns, that is, for every $j$, we join all the terms depending only on $(k_{i,j},i\in [d],i\neq j)$, as explained in \eqref{eqnExampleChangeSummationOverColumns}. 
That is, fixing $\mathfrak{e}\in \bb F_d^{\text{elem.}}$, the sum over all values $(k_{i,j})\in I'(\bo r,\bo n)$ is the same as the sum over all values $(k_{i1})_{i\in [d],i\neq 1}$ of $k_{\mathfrak{e}_1,1} \mu^{*n_1}_{1,1}(-r_1-\sum_{i\neq 1}k_{i1})\prod_{i\neq 1}\mu^{*n_i}_{i,1}(k_{i1})$, times the sum over all values $(k_{i2})_{i\in [d],i\neq 2}$ of $k_{\mathfrak{e}_2,2} \mu^{*n_2}_{2,2}(-r_2-\sum_{i\neq 2}k_{i2})\prod_{i\neq 2}\mu^{*n_i}_{i,2}(k_{i2})$, and so on. Hence, if for every $j$ we define 
\begin{equation*}
	I^j(r_j,n_j)=\{ (k_{i,j})_{i\in [d],\ i\neq j}:k_{i,j}\geq 0\mbox{ for }i\neq j,0\leq r_j+\sum_{i\neq j}k_{i,j}\leq n_j\},
\end{equation*}then
\begin{align*}
	\bo P\paren{\bo O_{\bo{r}}=\bo n}
	& =\frac{1}{\prod n_i}\sum_{\mathfrak{e}\in \bb F_d^{\text{elem.}}}\prod_{j=1}^d\paren{\sum_{(k_{i,j})_{i\in [d],i\neq j}\in I^j(r_j,n_j)}k_{\mathfrak{e}_j,j} \mu^{*n_j}_{j,j}(-r_j-\sum_{i\neq j}k_{i,j})\prod_{i\neq j}\mu^{*n_i}_{i,j}(k_{i,j})}\\
	& =\frac{1}{\prod n_i}\sum_{\mathfrak{e}\in \bb F_d^{\text{elem.}}}\prod_{j=1}^d\esp{X^{\mathfrak{e}_j,j}_{n_{\mathfrak{e}_j}} \mu^{*n_j}_{j,j}\paren{-r_j-\sum_{i\neq j}X^{i,j}_{n_{i}}}},
\end{align*}using the convolution formula. 
Note that whenever $r_j=n_j$ for some $j$, the above formula still is true since $	I^j(r_j,n_j)$ contains only the null vector, and hence the sum over the latter equals $\mu^{*n_j}_{j,j}(0)\prod_{i\neq j} \mu^{*n_i}_{i,j}(0)=\esp{\mu^{*n_j}_{j,j}\paren{-r_j-\sum_{i\neq j}X^{i,j}_{n_{i}}}}$. 

Fix distinct $i,j\in[d]$. 
Let $\mathcal{F}_{-j} = \sigma(X^{\ell,j}_{n_\ell} , \ell\in [d]\setminus \{j\})$ be the $\sigma$-algebra generated by all random walks except the $j$-th one. 
Conditioning on $\mathcal{F}_{-j}$ and by the independence assumption \hyperref[descriptionCMGIndependence]{\bo{H1}}, we obtain 
\begin{align*}
	\esp{X^{i,j}_{n_i} \indi{\sum_{\ell=1}^d X^{\ell,j}_{n_{\ell}} = -r_j} \Biggm| \mathcal{F}_{-j}}
	&= X^{i,j}_{n_i} \esp{\indi{X^{j,j}_{n_j} = -r_j - \sum_{\ell\neq j} X^{\ell,j}_{n_{\ell}}} \Biggm| \mathcal{F}_{-j}} \\
	&= X^{i,j}_{n_i} \mu^{*n_j}_{j,j}\paren{-r_j-\sum_{\ell\neq j}X^{\ell,j}_{n_{\ell}}}.
\end{align*}
Taking the expectation on both sides and applying the tower property, we have
\begin{align*}
\prod_{j=1}^d\esp{X^{\mathfrak{e}_j,j}_{n_{\mathfrak{e}_j}} \mu^{*n_j}_{j,j}\paren{-r_j-\sum_{\ell\neq j}X^{\ell,j}_{n_{\ell}}}} 
&= \prod_{j=1}^d \esp{X^{\mathfrak{e}_j,j}_{n_{\mathfrak{e}_j}} \indi{\sum_{\ell=1}^d X^{\ell,j}_{n_{\ell}} = -r_j}} \\
&= \prod_{j=1}^d \widehat{k}_{\mathfrak{e}_j,j}(\bo{n}, \bo{r}) \proba{\sum_{\ell=1}^d X^{\ell,j}_{n_\ell} = -r_j},
\end{align*}
where $\widehat{k}_{i,j}(\bo{n}, \bo{r}) = \esp{X^{i,j}_{n_i} \mid \sum_{\ell=1}^d X^{\ell,j}_{n_\ell} = -r_j}$ for $i \neq 0$, and $\widehat{k}_{0,j}(\bo{n}, \bo{r}) = r_j$. By Lemma 4.5 of \cite{MR3449255}, summing the product $\prod_{j=1}^d \widehat{k}_{\mathfrak{e}_j,j}(\bo{n}, \bo{r})$ over all elementary forests $\mathfrak{e} \in \bb F_d^{\text{elem.}}$ is exactly equal to $\det(-\widehat{\bb k}(\bo r, \bo n))$, establishing \eqref{eqnGeneralDeterminantFormula}.

Now we prove the particular expressions derived under different hypothesis.

\textit{Assuming \hyperref[descriptionCMGConvolution]{\bo{H2}}}.
Note that $\widehat{k}_{\mathfrak{e}_j,j}(\bo{n}, \bo{r})$ is precisely the left-hand side of Hypotheses \hyperref[descriptionCMGConvolution]{\bo{H2}}. 
	Thus, from the latter 
	\[\widehat{k}_{\mathfrak{e}_j,j}(\bo{n}, \bo{r})= \begin{cases} 
	r_j & \mathfrak{e}_j=0 \\
	n_{\mathfrak{e}_j}(n_j-r_j)/\#\bo n & \mathfrak{e}_j\neq 0.
	\end{cases}
	\]Using Lemma \ref{lemmaDeterminantUnderHypothesis} in the Appendix, this implies
\[
\frac{1}{\prod n_i}\sum_{\mathfrak{e}\in \bb F_d^{\text{elem.}}}\prod_{j=1}^{d}\widehat{k}_{\mathfrak{e}_j,j}(\bo{n}, \bo{r})=\frac{\#\bo r}{\#\bo n}.
\]

\textit{Assuming \hyperref[descriptionCMGProportional]{\bo{H4}}.}
We define the conditional expectations $\widehat{k}_{i,j}(\bo{n}, \bo{r}) = u_i v_j$ for $i \neq j$, where $u_i = \frac{w_i n_i}{\sum_{\ell=1}^d w_\ell n_\ell}$ and $v_j = n_j - r_j$. The diagonal elements of $-\widehat{\bb k}(\bo r,\bo n)$ satisfy:
\begin{align*}
    -\widehat{k}_{j,j}(\bo{n}, \bo{r}) &= r_j + \sum_{\ell \neq j} u_\ell v_j = r_j + v_j(1 - u_j) = r_j + (n_j - r_j) - u_j v_j = n_j - u_j v_j.
\end{align*}
Therefore, we can decompose $-\widehat{\bb k} = D - \bo u \bo v^T$, where $D = \text{diag}(n_1, \dots, n_d)$. Applying the Matrix Determinant Lemma:
\begin{align*}
    \det(-\widehat{\bb k}) &= \det(D)(1 - \bo v^T D^{-1} \bo u) 
    = \left(\prod_{i=1}^d n_i\right) \left( 1 - \sum_{j=1}^d \frac{n_j - r_j}{n_j} \frac{w_j n_j}{\sum_{\ell=1}^d w_\ell n_\ell} \right) \\
    &= \left(\prod_{i=1}^d n_i\right) \frac{\sum_{\ell=1}^d w_\ell n_\ell - \sum_{j=1}^d w_j(n_j - r_j)}{\sum_{\ell=1}^d w_\ell n_\ell} 
    = \left(\prod_{i=1}^d n_i\right) \frac{\sum_{\ell=1}^d w_\ell r_\ell}{\sum_{\ell=1}^d w_\ell n_\ell}.
\end{align*}
Dividing this determinant by $\prod n_i$ directly yields the factor $\frac{\sum_{i=1}^d w_i r_i}{\sum_{\ell=1}^d w_\ell n_\ell}$, proving \eqref{eqnRankOneSimplification}.

\textit{Assuming \hyperref[descriptionCMGProgressive]{\bo{H5}}. } Because $X^{i,j}_1 = 0$ almost surely for $i > j$, we have $\widehat{k}_{i,j}(\bo{n}, \bo{r}) = 0$ for $i > j$. Consequently, the matrix $-\widehat{\bb k}$ is upper triangular. The determinant of an upper triangular matrix is exactly the product of its diagonal entries. Since $-\widehat{k}_{j,j}(\bo{n}, \bo{r}) = r_j + \sum_{i \neq j} \widehat{k}_{i,j}(\bo{n}, \bo{r}) = r_j + \sum_{i < j} \widehat{k}_{i,j}(\bo{n}, \bo{r})$, we obtain:
\begin{equation*}
    \det(-\widehat{\bb k}) = \prod_{j=1}^d \left( r_j + \sum_{i < j} \widehat{k}_{i,j}(\bo{n}, \bo{r}) \right).
\end{equation*}
This directly yields Equation \eqref{eqnDAGSimplification}, completing the proof.
\end{proof}

We now prove Corollary  \ref{coroRelatingFirstHittingTimeAndBridges}. 
This result will be used when computing the complexity time of our algorithm 

\begin{proof}[Proof of Corollary \ref{coroRelatingFirstHittingTimeAndBridges}]
Let $\mathbb{E}_{\bo n, \bo r} \subset \mathbb{B}_{\bo n, \bo r}$ be the subset of multidimensional bridges that are strictly minimal solutions (i.e., paths $\bb w$ that successfully code a valid forest, satisfying $\bo T(\bo r,\bb w) = \bo n$). By the Multivariate Cyclic Lemma \cite[Lemma 3.3]{MR3449255}, which applies by the assumption on $\bo r,\bo n$ and $\bb X$, for any bridge $\bb x \in \mathbb{B}_{\bo n, \bo r}$, exactly $\det(-\bb k(\bb x))$ of its $\prod_{i=1}^d n_i$ cyclical permutations $\theta_{\bo q, \bo n}(\bb x)$ belong to $\mathbb{E}_{\bo n, \bo r}$. We can express this counting fact as 
\begin{equation*}
    \det(-\bb k(\bb x)) = \sum_{\bo 0 \le \bo q \le \bo n - \bo 1_d} \indi{\theta_{\bo q, \bo n}(\bb x) \in \mathbb{E}_{\bo n, \bo r}}.
\end{equation*}Since $\bb X$ and $F_{\bo n}$ are invariant under cyclical permutations, and there are $\prod n_i$ of them
\begin{align*}
\E\paren{F_{\bo n}(\bb X);\bo T(\bo r,\bb X)=\bo n}&= \E\paren{F_{\bo n}(\theta_{\bo q, \bo n}(\bb X));\bo T(\bo r,\theta_{\bo q, \bo n}(\bb X))=\bo n}\\
&= \frac{1}{\prod n_i}\sum_{\bo 0 \le \bo q \le \bo n - \bo 1_d}\E\paren{F_{\bo n}(\theta_{\bo q, \bo n}(\bb X));\theta_{\bo q, \bo n}(\bb X) \in \mathbb{E}_{\bo n, \bo r}}\\
&= \frac{1}{\prod n_i}\sum_{\bo 0 \le \bo q \le \bo n - \bo 1_d}\E\paren{F_{\bo n}(\bb X);\theta_{\bo q, \bo n}(\bb X) \in \mathbb{E}_{\bo n, \bo r}}\\
&= \frac{1}{\prod n_i}\E\paren{F_{\bo n}(\bb X)\indi{\bb X\in \mathbb{B}_{\bo n, \bo r}}\sum_{\bo 0 \le \bo q \le \bo n - \bo 1_d}\indi{\theta_{\bo q, \bo n}(\bb X) \in \mathbb{E}_{\bo n, \bo r}}}\\
&=\E\paren{F_{\bo n}(\bb X)\frac{{\rm det}(-\bb k(\bb X))}{\prod n_i};\bb X\in \bb B_{\bo n,\bo r}}.
\end{align*}The last assertion of the lemma follows directly from Theorem \ref{teoLawOfTheTotalPopulationByTypes}. 
\end{proof}

Now we obtain the total size of the MBGW tree, using the result in Theorem \ref{teoLawOfTheTotalPopulationByTypes}. 

\begin{proof}[Proof of Corollary \ref{coroLawOfTheTotalPopulationInAMBGW}]
	By the law of total probability, we sum over all valid configurations of $n_1, \dots, n_d$ such that $\sum n_\ell = n$ and $n_j \ge r_j$ for all $j\in [d]$. Because $\bo r \in \mathbb{N}^d$, we safely guarantee that $n_j \ge r_j \ge 1$, which is required in Theorem \ref{teoLawOfTheTotalPopulationByTypes}. Using Theorem \ref{teoLawOfTheTotalPopulationByTypes} under \hyperref[descriptionCMGConvolution]{\bo{H2}}, we have:
	\begin{align*}
	\bo P\paren{\#\bo O_{\bo r}=n} &= \sum_{\substack{n_j,\ge r_j \,\forall\,j\\ \sum n_j = n}} \bo P\paren{\bo O_{\bo r} = (n_1,\ldots, n_d)} \\
	&= \frac{\#\bo r}{n} \sum_{\substack{n_j,\ge r_j \,\forall\,j \\ \sum n_j = n}} \prod_{j=1}^d \proba{\sum_{\ell=1}^d X^{\ell,j}_{n_\ell} = -r_j}.
	\end{align*}
	Let $k_j = n_j - r_j$. The condition $\sum_{j=1}^d n_j = n$ becomes $\sum_{j=1}^d k_j = n - \#\bo r$. Thus, we obtain:
	\begin{align*}
	\bo P\paren{\#\bo O_{\bo r}=n} &= \frac{\#\bo r}{n} \sum_{\substack{k_1, \dots, k_d \ge 0 \\ \sum k_j = n-\#\bo r}} \prod_{j=1}^d f_n^{(j)}(k_j).
	\end{align*}
	Since $f_n^{(j)}$ represents the probability mass function of the independent random variables $Y_n^{(j)}$, the sum over all valid non-negative partitions $k_1 + \dots + k_d = n - \#\bo r$ is exactly the discrete convolution evaluated at $n-\#\bo r$. Thus, the summation equals $\proba{\sum_{j=1}^d Y^{(j)}_{n} = n - \#\bo r}$.
\end{proof}

We now show that, whenever the offspring distribution is \emph{parent-independent}, Hypothesis \hyperref[descriptionCMGConvolution]{\bo{H2}} is automatically satisfied by any discrete probability distribution because of exchangeability.
We will write $\mu_i=(\mu_{i,1},\ldots, \mu_{i,d})$. 

\begin{lemma}\label{lemmaExchangeabilityH2}
Assume \hyperref[descriptionCMGIndependence]{\bo{H1}} holds. If for every $j \in [d]$, the marginal distribution of the number of type $j$ children is independent of the parent's type $i$ (that is, $\mu_{i,j} \equiv f_j$ for all $i \in [d]$), then Hypothesis \hyperref[descriptionCMGConvolution]{\bo{H2}} is automatically satisfied.
\end{lemma}
\begin{proof}
Recall that $\kappa_j(u_{m}^{(\ell)})$ is the number of type $j$ children born to the $m$-th individual of type $\ell$. 
By \hyperref[descriptionCMGIndependence]{\bo{H1}}, for a fixed $j$, the random variables $\big( \kappa_j(u_{m}^{(\ell)});\ell \in [d], 1 \le m \le n_\ell\big)$ form a sequence of $\#\bo n = \sum_{\ell=1}^d n_\ell$ i.i.d. random variables.

Let $S_j = \sum_{\ell=1}^d \sum_{m=1}^{n_\ell} \kappa_j(u_{m}^{(\ell)})$ be the total number of type $j$ children in the forest. By the exchangeability of i.i.d. random variables, the conditional expectation of any single variable given the total sum $S_j = n_j-r_j$ is simply the average $(n_j-r_j)/\#\bo n$. 
It follows that
\begin{equation*}
	\esp{X^{i,j}_{n_i} \;\middle|\; S_j = n_j - r_j} = \sum_{m=1}^{n_i} \esp{\kappa_j(u_{m}^{(i)}) \;\middle|\; S_j = n_j - r_j} = \frac{n_i}{\#\bo n} (n_j - r_j),
\end{equation*}which is exactly  \hyperref[descriptionCMGConvolution]{\bo{H2}}.
\end{proof}

\subsection{Application to the enumeration of plane, labeled and binary multitype forests with given roots and types sizes}\label{subsectionExamplesMultitypeGeoPoiBer}

In this section we prove Propositions \ref{propoH2Geometric}, \ref{propoH2Poisson} and \ref{propoH2Binomial}, providing three examples where the Hypotheses \hyperref[descriptionCMGConvolution]{\bo{H2}} are satisfied, under the assumptions of Theorem \ref{teoLawOfTheTotalPopulationByTypes}.
Noe that in the three examples, Hypothesis \hyperref[descriptionCMGConvolution]{\bo{H2}} is satisfied by Lemma \ref{lemmaExchangeabilityH2}. 
We consider MBGW forests with mean matrix $M$ having $i$-th row of the form $(m_{i1},\ldots, m_{id})=(p_1,\ldots, p_d)$ with $p_j\geq 0$ and $\sum_{j\in [d]}p_j=1$. 
In this case, the process is critical since trivially $M$ has right-eigenvector $(1,\ldots, 1)$. 
For the geometric distribution, this is accomplished assuming $\sum _{j\in [d]}(1-p_j)/p_j\leq 1$.
For the Poisson we need $\sum \mu_j\leq 1$ and for the Bernoulli $2\sum p_j\leq 1$ since individuals have zero or two children.  

\begin{remark}
The conditions on the parameters are imposed to work with (sub)critical MBGW forests (c.f. Section 2.3 of \cite{MR2908619}, where for single-type simply generated trees, the parameters can be changed by working with equivalent weights). 
\end{remark}

\subsubsection{Geometric Offspring}
Recall that for any $\bo f\in \mathbb{F}^{plane}_{\bo{n,r}}$, we denote by $\bo f^{(i)}$ the subforest of type $i$ of $\bo f$. 
Denoting by $c_i(v)$ the number of children type $i$ that vertex $v$ has, then 
\begin{align*}
\bo P\paren{\mathcal{G}_{{\bf r},\bo p}=\bo f}
& = \prod_{j\in [d]}\prod_{v\in \bo f^{(j)}}\mu_j(c_1(v),\ldots, c_d(v))\\
& = \prod_{v\in \bo f}\prod_{j\in [d]}p_j(1-p_j)^{c_j(v)}\\
& = \prod_j p_j^{\#\bo n} \prod_j (1-p_j)^{\mbox{number of children type j}}\\
&= \prod_j p_j^{\#\bo n} \prod_j (1-p_j)^{n_j-r_j},
\end{align*}where the product in the second line is over all $\#\bo n$ vertices in $\bo f$.

Thus, using Theorem \ref{teoLawOfTheTotalPopulationByTypes}, we obtain
\begin{equation*}
	\bo P\paren{\#\mathcal{G}_{{\bf r},\bo p}=\bo n}=\frac{\#\bo r}{\#\bo n}\prod_j {\#\bo n+n_j-r_j-1\choose n_j-r_j}p_j^{\#\bo n}(1-p_j)^{n_j-r_j}.
\end{equation*}Joining the above computations, we obtain Proposition \ref{propoH2Geometric}. 
Note that we also obtain the distributional equality
\begin{equation*}
	\mathcal{F}^{plane}_{\bo{n,r}}\stackrel{d}{=}\paren{\mathcal{G}_{{\bf r},\bo p}|\#\mathcal{G}_{{\bf r},\bo p}=\bo n},
\end{equation*}where $\mathcal{F}^{plane}_{\bo{n,r}}$ is uniform on $\mathbb{F}^{plane}_{\bo{n,r}}$.
Taking $d=1$ agrees with the unidimensional case, see Formula (35) in \cite{MR1630413}.

\subsubsection{Poisson Offspring}
For simplicity consider $d=2$. 
Fix  any $\bo f\in \mathbb{F}^{plane}_{\bo{n,r}}$ having $r_i$ roots type $i$, $k_{1,2}$ type 2 individuals whose parent is of type 1, and $k_{2,1}$ type 1 individuals whose parent is of type 2. 
Then
\begin{align*}
 \bo P\paren{\mathcal{P}_{{\bf r},\bm \mu}=\bo f}
& = e^{-\mu_1 \#\bo n}\mu_1 ^{n_1-r_1-k_{21}} \mu_2 ^{k_{12}}\prod_{v\in \bo f^{(1)}}\frac{1}{c_1(v)!}\prod_{v\in \bo f^{(1)}}\frac{1}{c_2(v)!}\\\
&\quad \times e^{-\mu_2 \#\bo n}\mu_2 ^{n_2-r_2-k_{12}} \mu_1 ^{k_{21}}\prod_{v\in \bo f^{(2)}}\frac{1}{c_1(v)!}\prod_{v\in \bo f^{(2)}}\frac{1}{c_2(v)!}\\
& = e^{-(\mu_1+\mu_2)\#\bo n }\mu_1 ^{n_1-r_1}\mu_2 ^{n_2-r_2} \prod_{v\in \bo{f}}\frac{1}{c_1(v)!c_2(v)!}.
\end{align*}

Using Theorem \ref{teoLawOfTheTotalPopulationByTypes} we obtain
\begin{equation*}
	\bo P\paren{\#\mathcal{P}_{{\bf r},\bm \mu}=\bo n}
	= \frac{\#\bo r}{\#\bo n}\prod_j\frac{e^{-\#\bo n\mu_j}(\#\bo n\mu_j)^{n_j-r_j}}{(n_j-r_j)!}.
\end{equation*}A similar formula has been obtained in Lemma 4.7 of \cite{MR4586227}, in the setting of inhomogeneous random graphs. It follows that
\begin{equation*}
	\bo P\paren{\mathcal{P}_{{\bf r},\bm \mu}=\bo f \mid \#\mathcal{P}_{{\bf r},\bm \mu}=\bo n}=\frac{\prod_j{n_j-r_j \choose c_j(v_1),\ldots ,c_j(v_{\#\bo n})}}{\frac{\#\bo r}{\#\bo n}\#\bo n^{\#\bo n-\#\bo r}}\ \ \ \ \ \ \ \forall \ \bo f\in \mathbb{F}^{plane}_{\bo{n,r}}.
\end{equation*}Note that for $d=1$ this agrees with the unidimensional case, as seen in Formula (39) of \cite{MR1630413}. 
Since the right-hand side depends on $\bo f$, it is not uniform on the set of plane forests. 
To obtain a uniform forest, we introduce a function as in \cite{MR1630413}.
Define $\Psi:\mathbb{F}^{labeled}_{\bo{n,r}}\mapsto \mathbb{F}^{plane}_{\bo{n,r}}$ as follows:
\begin{enumerate}
	\item Order the trees of the forest, according to the natural order of the labels in the roots of type 1, then order the type 2 roots, and so on.

	\item For each vertex $v_i$ of type $i$, order its $c_1(v_i)$ children of type 1 according to its labels, its $c_2(v_i)$ children of type 2 according to its labels, and so on.
	\item Erase the labels.
\end{enumerate}Now, we find the number of forests in $\mathbb{F}^{labeled}_{\bo{n,r}}$ that are sent to a given plane forest $\bo f$. 
For each $i$, there are $(n_i-r_i)!$ ways to label the type $i$ vertices (recall that our rooted labeled forests have root set $[r]$).
But the permutation of the children of a fixed type of each vertex also lead to the same forest $\bo f$.
That is, if vertex $v$ has $c_i(v)$ children type $i$, there are $c_i(v)!$ labelings of such children leading to $\bo f$. 
This being true for every type and every vertex, we have
\begin{equation*}
	\# \Psi^{-1}(\bo f)=\frac{\prod_j (n_j-r_j)!}{\prod_{v\in \bo f}\prod_j c_j(v)!}.
\end{equation*}This is exactly the numerator in the formula obtained above.
Thus, we have the following interpretation: let $\mathcal{F}^{labeled}_{\bo{n,r}}$ have uniform distribution over the set of all $d$-type labeled forests, where the roots are in $[r]$, with roots-type $\bo{r}$ and individuals-type $\bo{n}$, and let $\mathcal{P}^*_{\bo{n,r}}$ be $\mathcal{P}_{\bo{n,r}}$ relabeled by $d$ uniform random permutations, one for each type, then
\begin{equation*}
	\mathcal{F}^{labeled}_{\bo{n,r}}\stackrel{d}{=}\paren{\mathcal{P}^*_{{\bf r},\bm \mu}\left|\#\mathcal{P}_{{\bf r},\bm \mu}=\bo n\right.}.
\end{equation*}We note that the previous formulas coincide with the results in \cite[Section 7]{MR1630413} for the unitype case.
This formula also provides a direct structural connection between multitype enumerations and classical unitype labeled forests. By Cayley's generalized formula for forests, the number of unitype labeled forests on a vertex set of size $\#\bo n$ having exactly $\#\bo r$ specified roots is precisely $\frac{\#\bo r}{\#\bo n}(\#\bo n)^{\#\bo n-\#\bo r}$. We can justify this numerical equivalence bijectively: any multitype labeled forest $\bo f \in \mathbb{F}^{labeled}_{\bo{n,r}}$ uniquely corresponds to a standard (unitype) labeled forest on $[\#\bo n]$ with root set $[\#\bo r]$. The $\#\bo r$ roots naturally retain their labels in $[\#\bo r]$. The $n_1-r_1$ non-root individuals of type $1$ are assigned the consecutive labels from $\#\bo r+1$ to $\#\bo r+n_1-r_1$ preserving their original relative order; the $n_2-r_2$ non-root individuals of type $2$ take the subsequent $n_2-r_2$ available labels, and so on for all types. Since the multitype structure simply partitions the labels into prescribed sizes for each type without affecting the tree topologies, this relabeling is a strict bijection. Thus, the multitype restriction perfectly preserves the exact total count of unitype labeled forests.

\subsubsection{Bernoulli Offspring}

In this case, since any vertex $v$ has zero or two children with probability $p$ or $1-p$ respectively, then $\mu_i(c_1(v),c_2(v))=p_1^{c_1(v)/2}(1-p_1)^{1-c_1(v)/2}p_2^{c_2(v)/2}(1-p_2)^{1-c_2(v)/2}$ for $d=2$.

As before, consider any $\bo f\in \mathbb{F}^{binary}_{\bo{n,r}}$ 
Note that 
$n_i-r_i$ for $i=1,2$
are even numbers.
Hence
\begin{align*}
\bo P\paren{\mathcal{B}_{{\bf r},\bo p}=\bo f}
& = \prod_v\prod_j \paren{\frac{p_j}{1-p_j}}^{\mbox{$\frac{1}{2}$\# children type $j$ of v}}(1-p_j)
&= \prod_j \paren{\frac{p_j}{1-p_j}}^{(n_j-r_j)/2}(1-p_j)^{\#\bo n}.
\end{align*}

From Theorem \ref{teoLawOfTheTotalPopulationByTypes} we obtain 
\begin{equation*}
	\bo P\paren{\#\mathcal{B}_{{\bf r},\bo p}=\bo n} = \frac{\#\bo r}{\#\bo n}\prod_j{ \#\bo n \choose (n_j-r_j)/2 } p_j^{(n_j-r_j)/2}(1-p_j)^{\#\bo n-(n_j-r_j)/2}.
\end{equation*}Putting all pieces together, we have shown Proposition  \ref{propoH2Binomial}.
Just as before, we also obtain 
\begin{equation*}
	\mathcal{F}^{binary}_{\bo{n,r}}\stackrel{d}{=}\paren{\mathcal{B}_{{\bf r},\bo p}|\#\mathcal{B}_{{\bf r},\bo p}=\bo n},
\end{equation*}where $\mathcal{F}^{binary}_{\bo{n,r}}$ is uniform on $\mathbb{F}^{binary}_{\bo{n,r}}$.
Compare this formula with the number of binary trees, which is related to the Catalan numbers, see Theorem 2.1 in \cite{MR2484382}.

\begin{propo}\label{propoH2NegativeBinomial}
	Assume the offspring distribution is given by independent Negative Binomial distributions, where $\mu_i(k_1,\dots,k_d) = \prod_{j\in[d]} \binom{k_j + m_j - 1}{k_j} p_j^{m_j} (1-p_j)^{k_j}$ for fixed integers $m_j \ge 1$ and $p_j \in (0,1)$. Let $\sum_{j\in[d]} m_j \frac{1-p_j}{p_j} \le 1$. Let $\mathcal{NB}_{{\bf r},\bo p}$ be a multitype forest with such offspring distribution. Then
	\begin{equation*}
		\bo P\paren{\mathcal{NB}_{{\bf r},\bo p}=\bo f|\#\mathcal{NB}_{{\bf r},\bo p}=\bo n}=\frac{\prod_{v \in \bo f}\prod_{j\in[d]} \binom{c_j(v) + m_j - 1}{c_j(v)}}{\frac{\#\bo r}{\#\bo n} \prod_{j\in[d]} \binom{n_j - r_j + \#\bo n m_j - 1}{n_j - r_j}} \ \ \ \ \ \ \forall\,\bo f\in \mathbb{F}^{plane}_{\bo{n,r}}.
	\end{equation*}
\end{propo}
For $m_j = 1$, the combinatorial products in the numerator collapse to $1$, exactly recovering the uniform plane forest distribution of Proposition \ref{propoH2Geometric}.

\begin{propo}\label{propoH2BinomialGeneral}
	Assume the offspring distribution is given by independent Binomials, that is $\mu_i(k_1,\dots,k_d) = \prod_{j\in[d]} \binom{m_j}{k_j} p_j^{k_j} (1-p_j)^{m_j-k_j}$ for fixed integers $m_j \ge 1$ and $p_j \in (0,1)$. Let $\sum_{j\in[d]} m_j p_j \le 1$. Let $\mathcal{B}'_{{\bf r},\bo p}$ be a multitype forest with such offspring distribution. Then
	\begin{equation*}
		\bo P\paren{\mathcal{B}'_{{\bf r},\bo p}=\bo f|\#\mathcal{B}'_{{\bf r},\bo p}=\bo n}=\frac{\prod_{v \in \bo f}\prod_{j\in[d]} \binom{m_j}{c_j(v)}}{\frac{\#\bo r}{\#\bo n}\prod_{j\in[d]} {\#\bo n m_j \choose n_j-r_j}} \ \ \ \ \ \ \forall\,\bo f\in \mathbb{F}^{plane}_{\bo{n,r}}.
	\end{equation*}
\end{propo}
Here, the numerator evaluates the number of ways to embed $\mathbf{f}$ into a strict $\bo m$-ary structure. Thus, this distribution enumerates  multitype $\bo m$-ary forests.

\section{Simulation of CMBGW forests}\label{sectionAlgorithmsConditionedRandomForests}

In this section, we utilize Theorem \ref{teoConstructionOFUniformMtypeForestWithGDS} to construct a MFGDS where the degree sequence itself is random. By combining this construction with the joint law of the total population by types established in Theorem \ref{teoLawOfTheTotalPopulationByTypes}, we prove our main result, Theorem \ref{teoSimulationOfCMBGWIntro}. We begin by demonstrating that, under an independence condition, a MBGW forest conditioned on its multitype degree sequence is equal in law to a MFGDS.

\subsection{Relation between MFGDS and CMBGW forests}\label{secRelationMFGDScMBGW}

For any given $\bo{n}=(n_1,\ldots, n_d)\in \mathbb{N}^d$ and $\bo{r}=(r_1,\ldots,r_d)\in \z_+^d$ with $\sum r_i>0$ and $\bo r\leq \bo n$, define the set of all Degree Sequences having $n_i$ individuals of type $i$, as
\begin{align*}
	DS(\bo{n,r})=&\left\{ \bb{s}=(n_{i,j}(k),i,j\in[d],k\geq 0):\right.\\
	&\ \ \ \ \left. n_i=\sum_k n_{i,j}(k),\ n_j=r_j+\sum_i\sum_kk n_{i,j}(k), n_{i,j}(k)\geq 0\mbox{ for $i,j\in [d]$}\right\}.
\end{align*}Also, for any given multitype forest $\bo f$, define its empirical multitype degree sequence $\widehat{n}(\bo f):=\widehat{n}=(\widehat{n}_{i,j}(k),i,j\in [d],k\geq 0)$ as
\begin{equation*}
	\widehat{n}_{i,j}(k)=\sum_{\ell:v_\ell\in \bo f^{(i)}} \indi{\bo{c}_{i,j}(\ell)=k},
\end{equation*}where the sum is taken over all vertices of the subforest $\bo f^{(i)}$, and $\bo{c}_{i,j}(\ell)$ is the number of children type $j$, that the $\ell$-th individual of the subforest $\bo f^{(i)}$ of vertices type $i$ has.
Recall from Definition \ref{defiMFGDS} that $\mathbb{P}_{\bb s,\mathbf{r}}$ is the law of a MFGDS. 

\begin{lemma}\label{lemmacMBGWAreMixtureOfMFGDS}
Fix any $\mathbf{n}=(n_1,\ldots, n_d)\in \mathbb{N}^d$ and  $\mathbf{r}=(r_1,\ldots,r_d)\in \mathbb{Z}_+^d\setminus \{\bo 0\}$ such that  $\mathbf{r}\leq \mathbf{n}$. Let $\bb s=(n_{i,j}(k),i,j\in [d], k\geq 0) \in DS(\mathbf{n,r})$ be a multitype degree sequence. Consider a MBGW forest with offspring distribution $\bm{\mu}=(\mu_1,\ldots, \mu_d)$ satisfying the independence condition \hyperref[descriptionCMGIndependence]{\bo{H1}}. 
Then, the law of the MBGW forest conditioned on its multitype degree sequence being $\bb s$ is equal to $\mathbb{P}_{\bb s,\mathbf{r}}$. 
Furthermore, the law of a CMBGW($\mathbf{n},\bo r$) forest can be represented as a finite mixture of the laws $\{\mathbb{P}_{\bb s, \mathbf{r}} : \bb s\in DS(\mathbf{n,r})\}$.
\end{lemma}
\begin{proof}
	Let $\mathcal{F}_{\bo r}$ be a MBGW forest. 
	By the assumption on $\mu_i$, we can write $\mu_i(\bo{z})=\prod_j \mu_{i,j}(z_j)$ for any $\bo z=(z_1,\ldots, z_d)\in \z_+^d$, any $i$, and some laws $\mu_{i,j}$ on $\mathbb{Z}_+$. 
	Let $\bo f_1$ and $\bo f_2$ be two multitype forests having degree sequence $\bb{s}\in DS(\bo{n,r})$. 
	Then
	\begin{align*}
		\bo P\paren{\mathcal{F}=\bo f_1,\widehat{N}(\mathcal{F})=\bb{s} }
		& = \prod_{i\in [d]} \prod_{v\in  \bo f_1^{(i)}}\prod_{j\in [d]} \mu_{i,j}(\kappa_j(v))\\
		& = \prod_{i\in [d]} \prod_{j\in [d]} \prod_{k\geq 0} \mu_{i,j}(k)^{N_{i,j}(k)}\\
		& = \prod_{i\in [d]} \prod_{v\in \bo f_2^{(i)}}\prod_{j\in [d]} \mu_{i,j}(\kappa_j(v))\\
		&=\bo P\paren{\mathcal{F}=\bo f_2,\widehat{N}(\mathcal{F})=\bb{s} }.
	\end{align*}This implies the first assertion.
	To prove the second assertion, we sum over all the values in $DS(\bo{n,r})$, obtaining
	\begin{align*}
		\bo P\paren{\mathcal{F}\in \cdot|\bo{O}_{{\bf r}}=\bo{n}}
		& =\frac{1}{\bo P\paren{\bo{O}_{{\bf r}}=\bo{n}}}\sum_{\bb s\in DS(\bo{n,r})}\bo P\paren{\mathcal{F}\in \cdot,\widehat{N}(\mathcal{F})=\bb{s}}\\
		& =\sum_{\bb s\in DS(\bo{n,r})}\frac{\bo P\paren{\widehat{N}(\mathcal{F})=\bb{s},\bo{O}_{{\bf r}}=\bo{n}}}{\bo P\paren{\bo{O}_{{\bf r}}=\bo{n}}}\bo P\paren{\mathcal{F}\in \cdot|\widehat{N}(\mathcal{F})=\bb{s}}\\
		& =\sum_{\bb s\in DS(\bo{n,r})}\lambda_{\bb{s}}\p_{\bb s ,\bo r}\paren{\mathcal{F}\in \cdot},
	\end{align*}where
	\begin{equation*}
		\lambda_{\bb{s}}=\bo P\paren{\widehat{N}(\mathcal{F})=\bb{s}|\bo{O}_{{\bf r}}=\bo{n}}.
	\end{equation*}Note that trivially $\sum_{\bb s\in DS(\bo{n,r})}\lambda_\bb{s}=1$. 
\end{proof}

\subsection{Simulation of CMBGW($\bo{n},\bo r$) forests with given type sizes}

We consider here the simulation of MBGW forests conditioned to have individuals-type $\bo{n}$ and root-type $\bo{r}$. 
Using Devroye's idea 
we propose Algorithm \ref{algorithm8}. 
We denote by $\p(\cdot|\bo O_{\bo r}=\bo n)$ the law of a CMBGW($\bo{n},\bo r$) with root-type $\bo{r}$, and by $\mu_{i,j}$ the $j$th marginal of the distribution $\mu_i$.

We now prove Theorem \ref{teoSimulationOfCMBGWIntro}. 
At the end of the proof, we add an acceptance-rejection method to actually simulate the CMBGW($\bo{n},\bo r$) forest, which is the content of Algorithm \ref{algorithm8}.

\begin{proof}[Proof of Theorem \ref{teoSimulationOfCMBGWIntro}]
	Fix any $d$-type forest $\bo f$, having root-type $\bo r$, individuals-type $\bo n$, and degree sequence $\bb{s}=(n_{i,j}(k),i,j\in [d])$. 
	Using the same notation as in Theorem \ref{teoConstructionOFUniformMtypeForestWithGDS}, let $\bb w^b$ be a multidimensional bridge in $\mathbb{B}_{\bb s, \bo r}$, having multidimensional Vervaat transform $\bb w=V(\bb w^b,u)$ for some $u\in [\det (-\bb k)]$, where $k_{i,j}:=\sum k n_{i,j}(k)-n_i\indi{i=j}$. 
	
Let $(\bb{S}_{(j)};j\geq 1)$ be the degree sequence generated in the $j$-th repetition of the algorithm. 
Let $G$ be a geometric random variable with parameter $\p\paren{\Xi(\bo{S}_{i,j},i\in [d])=n_j,\ \forall\ j}$, and thus $\bb S=\bb S_{(G)}$, where $\bb S$ is the degree sequence described in the statement of the theorem.  
	Using that $\bb W^b$ has exchangeable increments, that $U$ is independent and uniform, and that there are $\prod n_i$ pairs $(\theta_{\bo{q,n}}(\bb w),u)$ that can be mapped to $\bb w$ (as seen on page \pageref{proofteoConstructionOFUniformMtypeForestWithGDS}), then
	\begin{align*}
	\p\paren{V(\bb{W}^b,U)=\bb w}& = \prod_i n_i\ \p\paren{\bb{W}^b=\bb w^b,U=u}\\
	& = \frac{1}{\frac{\det(-\bb k)}{\prod n_i}\prod_i \prod_j {n_i \choose n_{i,j}}}\p\paren{\bb S_{(K)}=\bb  s}\\
	& = \frac{1}{\frac{\det(-\bb k)}{\prod n_i}\prod_i \prod_j {n_i \choose n_{i,j}}}\frac{\p\paren{\bb S_{(1)}=\bb  s}}{\p\paren{\Xi(\bo{S}_{i,j},i\in [d])=n_j,\ \forall\ j}}.
	\end{align*}
	
	We compute explicitly the last fraction of the above equation. 
	For the term $\proba{\bb S_{(1)}=\bb s}$, we use the definition of the multinomial distribution
	\begin{equation*}
		\p\paren{\bb S_{(1)}=\bb s}=\prod_i \prod_j {n_i \choose n_{i,j}}\prod_{\ell\geq 0}\mu_{i,j}(\ell)^{n_{i,j}(\ell)}.
	\end{equation*}
    Notice that because the unconditioned random walk $\bb X$ has independent increments, the probability of generating the specific path $\bb w$ is exactly the product of the probabilities of its individual steps. Thus, $\p(\bb X = \bb w) = \prod_{i,j} \prod_{\ell \ge 0} \mu_{i,j}(\ell)^{n_{i,j}(\ell)}$. It follows that 
    \begin{equation*}
        \p\paren{\bb S_{(1)}=\bb s} = \p(\bb X = \bb w)\prod_i \prod_j {n_i \choose n_{i,j}}.
    \end{equation*}For the denominator we have
	\begin{align*}
	\p\paren{\Xi(\bo{S}_{i,j},i\in [d])=n_j,\ \forall\ j}& =\sum_{\substack{\bb{s}=(n_{i,j}):\\
			\sum_i\sum_k k n_{i,j}(k)=n_j-r_j,\forall\, j\\ \sum_k n_{i,j}(k)=n_i,\forall\ i }}\p\paren{\bb S_{(1)}=\bb s}\\
	& =\sum_{\substack{\bb{s}=(n_{i,j}):\\
			\sum_i\sum_k k n_{i,j}(k)=n_j-r_j,\forall\, j\\ \sum_k n_{i,j}(k)=n_i,\forall\ i }} \prod_i \prod_j {n_i \choose n_{i,j}}\prod_{\ell\geq 0}\mu_{i,j}(\ell)^{n_{i,j}(\ell)}.
	\end{align*}

	On the other hand, note that for fixed $j$, using the convolution formula:
	\begin{align*}
	\proba{\sum_{k=1}^d X^{k,j}_{n_k}=-r_j}& = \sum_{\sum_{k=1}^d\sum_{\ell=1}^{n_k} i_{k,\ell}=n_j-r_j}\prod_{k=1}^{d}\prod_{\ell=1}^{n_k}\mu_{k,j}(i_{k,\ell})\\
	& = \sum_{\substack{ \sum_i\sum_kk n_{i,j}(k)=n_j-r_j,\\\sum_k n_{i,j}(k)=n_i,\forall\ i} } \prod_i {n_i \choose n_{i,j}}\prod_{\ell\geq 0}\mu_{i,j}(\ell)^{n_{i,j}(\ell)},
	\end{align*}where in the last equality, we used the fact that $\prod_i{n_i \choose n_{i,j}}$ is the number of different bridges having the same degree sequence $(n_{1,j}(0), n_{1,j}(1),\ldots ), \ldots,(n_{d,j}(0), n_{d,j}(1),\ldots )$. 
	Thus, multiplying for all $j$ we have
	\begin{equation}\label{eqnXiOfSAndProductOfDensities}
		\p\paren{\Xi(\bo{S}_{i,j},i\in [d])=n_j,\ \forall\ j}=\prod_j \proba{\sum_k X^{k,j}_{n_k}=-r_j}. 
	\end{equation}
	
Substituting these into our initial equation, we obtain:
	\begin{align*}
	\p\paren{V(\bb{W}^b,U)=\bb w}& =  \frac{1}{\frac{\det(-\bb k)}{\prod n_i}}\frac{ \p(\bb X = \bb w) }{\prod_j \proba{\sum_k X^{k,j}_{n_k}=-r_j}}\\ 
	& =  \frac{1}{\frac{\det(-\bb k)}{\prod n_i}}\frac{\p\paren{\bb{X}=\bb w}}{\frac{\prod n_i}{\det\paren{-\widehat{\bb k}(\bo{r}, \bo{n})}}\p\paren{\bo T_{\bo r}=\bo n}}\\
	& =  \frac{\det\paren{-\widehat{\bb k}(\bo{r}, \bo{n})}}{\det(-\bb k)}\p\paren{\bb{X}=\bb w|\bo T_{\bo r}=\bo n}, 
	\end{align*} 
    where the second equality follows from \eqref{eqnGeneralDeterminantFormula} in Theorem \ref{teoLawOfTheTotalPopulationByTypes}.
This proves the first part of the theorem.

Note that under Hypothesis \hyperref[descriptionCMGConvolution]{\bo{H2}}, by \eqref{eqnOterDwassGeneralizationH2} we obtain 
	\begin{align*}
	\p\paren{V(\bb{W}^b,U)=\bb w} =\frac{1}{\frac{\# \bo n}{\# \bo r}\frac{\det(-\bb k)}{\prod n_i}}\p\paren{\bb{X}=\bb w|\bo T_{\bo r}=\bo n}.
	\end{align*}

From Algorithm \ref{algorithm8}, the first 9 steps constitute a standard acceptance-rejection method (see \cite[Section 8.2.4]{MR0630193}) applied directly to the degree sequence. 
	For each multitype forest coded by $\bb w$, with root-type $\bo{r}$ and individuals-type $\bo{n}$, define the likelihood ratio between the target conditional density $f(\bb w) := \p\paren{\bb X=\bb w \mid \bo T_{\bo r}=\bo n}$ and the proposal density $g(\bb w) := \p\paren{V(\bb W^{b,\bb S},U)=\bb w}$ as:
	\begin{equation*}
		c_{\bb w} = \frac{f(\bb w)}{g(\bb w)} = \frac{\det(-\bb k)}{\det\paren{-\widehat{\bb k}(\bo r, \bo n)}},
	\end{equation*}
	(note that $\bb k$ depends on $\bb w$ only through its terminal values $k_{i,j}=\sum k n_{i,j}(k)-n_i\indi{i=j}$). 
	By Lemma \ref{lemmaNumberOfGoodCyclicalPerm},, $\det(-\bb k)$ counts the exact number of good cyclical permutations of the bridge $\bb w^b$. Since the total number of cyclical permutations available in the multidimensional grid is exactly $\prod_{i=1}^d n_i$, we have $\det(-\bb k) \le \prod_{i=1}^d n_i$. 
	
	This provides us with a universal majorizing constant $c$ given by $
        c_{\bb w} \leq \det\paren{-\widehat{\bb k}(\bo r, \bo n)}^{-1}\prod_{i=1}^d n_i =: c.
    $
	We can thus apply the acceptance-rejection method to simulate a CMBGW($\bo n,\bo r$) forest, accepting the generated breadth-first walk $\bb W$ drawn from $g(\cdot)$ when an independent uniform variable $V$ on $[0,1]$ satisfies:
	\begin{equation*}
		V \leq \frac{f(\bb W)}{c \cdot g(\bb W)} = \frac{c_{\bb W}}{c} = \frac{\det(-\bb k) / \det(-\widehat{\bb k}(\bo r, \bo n))}{\prod_{i=1}^d n_i / \det(-\widehat{\bb k}(\bo r, \bo n))} = \frac{\det(-\bb k)}{\prod_{i=1}^d n_i}\leq 1.
	\end{equation*}
	Notice that the unknown theoretical determinant $\det\paren{-\widehat{\bb k}(\bo r, \bo n)}$ cancels out of the acceptance ratio. 
This concludes the proof.
\end{proof}

\section{Expected complexity}\label{sectionExpectedComplexity}

In this section we prove that our algorithm has a significantly smaller expected complexity than a standard na\"ive algorithm to produce a CMBGW forest.
In order to do so, we need a tight control of the terms $\mu^{*\bo n}_j(-r_j)$ and $\proba{\bo T_{\bo r} = \bo n}$ as $\bo n$ goes to infinity. 
Thus, first we establish local limit theorems for both probabilities. 
The proof follows easily once we established Theorem \ref{teoLawOfTheTotalPopulationByTypes}. 

\begin{proof}[Proof of Lemma \ref{lemmaLLTTargetSums}]
	For a fixed $j \in [d]$, the term $\sum_{\ell=1}^d X^{\ell,j}_{n_\ell}$ is a sum of $\# \bo n$ independent, integer-valued random variables. Since there are only $d$ distinct distributions and $n_\ell \to \infty$ proportionally to $\# \bo n$, and the ${\rm g.c.d.}(H_\ell;\ell\in D)=1$, then \cite{MR388499} Chapter VII, Theorem 2 implies the first result. 
	The second convergence regarding $\proba{\bo T_{\bo r} = \bo n}$ follows trivially from the first convergence and Theorem \ref{teoLawOfTheTotalPopulationByTypes}. 

	Finally, since $\sigma^{(\bo n)}_j = \Theta(\# \bo n)$ and $|-r_j - m^{(\bo n)}_j| \le c \sqrt{\# \bo n}$, the exponential term is bounded strictly away from $0$, yielding the last stated asymptotic.
\end{proof}

We now provide an explicit construction for the sizes $\bo n $ that strictly satisfies the necessary condition to apply the local limit theorem without incurring an exponential decay. 

\begin{example}[Example of permissible sizes by types]\label{lemmaExplicitConstruction}
Assume the multitype Bienaym\'e-Galton-Watson process is irreducible and critical, so the spectral radius of the mean matrix $\bb M = (m_{\ell,j})_{\ell,j=1}^d$ is $1$. Let $\bm{\xi} = (\xi_1, \dots, \xi_d)$ be the strictly positive left eigenvector of $\bb M$ corresponding to the eigenvalue $1$, normalized such that $\sum_{i=1}^d \xi_i = 1$. 

For any integer $N \in \mathbb{N}$ and any given functions $f_i(N) = O(\sqrt{N})$ for $i \in [d]$, define the type sizes as $n_i := \floor{N \xi_i + f_i(N)}, \quad i \in [d]$.
Then $\# \bo n = \Theta(N)$, and for any fixed root-type $\bo r \in \mathbb{Z}^d_+$, the deviation satisfies $|-r_j - m^{(\bo n)}_j| = O(\sqrt{\# \bo n})$. 
\end{example}
\begin{proof}
By definition, $n_\ell = N \xi_\ell + f_\ell(N) - \varepsilon_\ell$ for some fractional part $\varepsilon_\ell \in [0, 1)$. Substituting this into the definition of $m^{(\bo n)}_j$:
\begin{align*}
	m^{(\bo n)}_j &= \sum_{\ell=1}^d (N \xi_\ell + f_\ell(N) - \varepsilon_\ell) m_{\ell,j} - (N \xi_j + f_j(N) - \varepsilon_j) \\
	&= N \paren{\sum_{\ell=1}^d \xi_\ell m_{\ell,j} - \xi_j} + \sum_{\ell=1}^d f_\ell(N) m_{\ell,j} - f_j(N) - \sum_{\ell=1}^d \varepsilon_\ell m_{\ell,j} + \varepsilon_j.
\end{align*}
Since $\bm{\xi}$ is the left eigenvector corresponding to the critical eigenvalue $1$, we have $\sum_{\ell=1}^d \xi_\ell m_{\ell,j} = \xi_j$. Therefore, the first term cancels. 
Since $f_i(N) = O(\sqrt{N})$ and $\varepsilon_i = O(1)$, it follows that $m^{(\bo n)}_j = O(\sqrt{N})$. 
Because $r_j$ is a fixed constant and $\# \bo n = \Theta(N)$, we deduce the result. 
\end{proof}

\subsection{Expected time complexity of the na\"ive method}\label{subsectionComplexityNaive}

The idea to construct a forest with a given size using the na\"ive method is an easy extension of the unitype case. To construct a unitype forest with $r$ roots of size $n$ having offspring distribution $\mu$, consider random walks $(X(\ell), \ell \ge 1)$ with step distribution $\mu(\cdot + 1)$. Now, keep generating copies of random walks of the form $(X(\ell), \ell \ge 1)$ until the first time it hits $-r$ exactly at time $n$, that is $\min\{\ell : X(\ell) = -r\} = n$. When that happens, such an excursion codes the desired forest of size $n$.

In the multitype case, we use random variables $(X^{i,j}(\ell), \ell \ge 1)$ with step distribution $\mu_i(\cdot + \bo e_i)$ for vertices of type $i$, and repeat until we obtain a multidimensional first passage bridge of length $\bo n$. Recall from Equation \eqref{eqnDefinitionT_rx} that such first time ensures we obtain a forest. 

The na\"ive algorithm is the following.

\begin{algorithm}[H]
	\caption{Na\"ive algorithm to generate a CMBGW forest}
	\label{algorithmGenerateNaiveCMGW}
	\begin{algorithmic}[1]
		\STATEx {\bf Input: }A critical offspring distribution $\bm \mu=(\mu_1,\ldots, \mu_d)$, and vectors $\bo r \in \mathbb{Z}^d_+ \setminus \{\bo 0\}$, $\bo n \in \mathbb{N}^d$.
		\STATEx {\bf Output: }A CMBGW($\bo n,\bo r$) forest.
		\STATE For every $i\in [d]$, generate the vectors $(X^{i,j}(k),j\in [d],k\in [n_i])$ using $\bm \mu$. 
		\STATE Construct the terminal sums $\bb X_{\bo n}$. 
		\IF{$r_j + (\bo 1 \cdot \bb X_{\bo n})_j \neq 0$ for some $j \in [d]$}
		    \STATE \textbf{Reject} (terminal state does not match) and repeat from Step 1.
		\ENDIF
		\IF{there exists $\bo m < \bo n$ such that $r_j + (\bo 1 \cdot \bb X_{\bo m})_j = 0$ for all $j \in [d]$}
		    \STATE \textbf{Reject} (not a minimal solution) and repeat from Step 1.
		\ENDIF
		\STATE \textbf{Accept} and construct the forest coded by $\bb X$.
	\end{algorithmic}
\end{algorithm}

Let us compute the expected time to obtain a CMBGW forest using this na\"ive method. The total expected time is the expected cost of a single iteration multiplied by the expected number of iterations. 
We assume the hypotheses of Lemma \ref{lemmaLLTTargetSums} are satisfied. 

Let $A^{(\bo n)}$ be the event that the generated path reaches the exact target sum at the end, i.e., $A^{(\bo n)} = \{ \sum_{\ell=1}^d X^{\ell,j}_{n_\ell} = -r_j \text{ for all } j \in [d] \}$. This means the walk forms a valid multidimensional bridge.
Let $B^{(\bo n)}$ be the event that the path actually codes a valid forest, meaning $\bo n$ is a multidimensional first passage bridge. Note that $\{\bo T_{\bo r} = \bo n\}=B^{(\bo n)} \subset A^{(\bo n)}$.

In a single independent iteration of Algorithm \ref{algorithmGenerateNaiveCMGW}, the computational costs are:
\begin{enumerate}
    \item Generating the full walk takes $\Theta(\#\bo n)$ time.
    \item Checking if the terminal state matches (event $A^{(\bo n)}$) takes $\Theta(d)$ time.
    \item If event $A^{(\bo n)}$ occurs, we perform the minimality check to see if $B^{(\bo n)}$ occurs. 
Since $n_i = \Theta(\#\bo n)$, checking all $\bo m \le \bo n$ na\"ively takes $\Theta\paren{\prod_{i=1}^d n_i}$ time.
\end{enumerate}

Therefore, the expected time spent in a \emph{single iteration} is:
\begin{equation*}
    \esp{\text{Time per iteration}} = \Theta(\#\bo n) + \proba{A^{(\bo n)}} \Theta\paren{\prod_{i=1}^d n_i}.
\end{equation*}

Because the iterations are independent and we reject until $B^{(\bo n)}$ occurs, the expected number of attempts is $\esp{I} = 1/\proba{B^{(\bo n)}}$.
By Wald's equation, the total expected time complexity of the na\"ive method is:
\begin{align*}
    \esp{T_{\text{na\"ive}}} &= \frac{1}{\proba{B^{(\bo n)}}} \esp{\text{Time per iteration}} \\
    &= \frac{1}{\proba{B^{(\bo n)}}} \Theta(\#\bo n) + \frac{\proba{A^{(\bo n)}}}{\proba{B^{(\bo n)}}} \Theta\paren{\prod_{i=1}^d n_i}.
\end{align*}

By Theorem \ref{teoLawOfTheTotalPopulationByTypes}, assuming Hypothesis \hyperref[descriptionCMGConvolution]{\bo{H2}} holds, we have exactly:
\begin{equation*}
    \proba{B^{(\bo n)}} =  \frac{\# \bo r}{\# \bo n} \proba{A^{(\bo n)}}.
\end{equation*}
Applying Lemma \ref{lemmaLLTTargetSums}, the expected total time reduces to:
\begin{equation*}
    \esp{T_{\text{na\"ive}}} = \Theta\paren{ (\# \bo n)^{d/2 + 2} +  (\#\bo n)^{d+1}} = \Theta\paren{ (\# \bo n)^{\max(d/2 + 2, \, d+1)}}.
\end{equation*}

Notice that for any $d \ge 2$, the minimality verification formally dominates, requiring $\Theta((\#\bo n)^{d+1})$ time. This extreme polynomial bottleneck clearly highlights the necessity of our proposed approach.

\subsection{Expected time complexity of Algorithm \ref{algorithm8}}\label{subsectionExpTimeComplexity}

The computations in this section generalize the expected time analysis from \cite[Section 5]{MR2888318}. By evaluating the expected time of a single iteration and applying Wald's equation, we highlight the efficiency of combining the multidimensional Vervaat transform with the early degree-sequence rejection natively executed in Algorithm \ref{algorithm8}.

In Algorithm \ref{algorithm8}, let $A^{(\bo n)}$ be the event that the generated multinomials satisfy the sum condition $\Xi_j = n_j$ for all $j \in [d]$ (Step 4). Because the row sums correspond to the independent steps of the random walks, the probability of this event is exactly $\proba{A^{(\bo n)}} = \prod_{j=1}^d \mu^{*\bo n}_j(-r_j)$. 
Let $B^{(\bo n)}$ be the event that the degree sequence is subsequently accepted via Step 9. 
Note that $B^{(\bo n)} \subset A^{(\bo n)}$.
    
In a single independent iteration of the rejection loop (Steps 1 to 11), the computational costs are:
\begin{enumerate}
    \item Generating the multinomial vectors (Steps 1--3). Denoting by $\tau_{i,j}(n_i)=\esp{K_{i,j}}$ the expected maximum number of children, this takes expected time $\Theta\paren{d^2 + \sum_{i,j} \tau_{i,j}(n_i)}$.
    \item Checking if the sums match (event $A^{(\bo n)}$ in Step 4) takes $\Theta(d)$ time.
    \item If event $A^{(\bo n)}$ occurs, we perform the acceptance check (Steps 7--11). Computing the determinant $\det(-\bb k)$ takes $\Theta(d^3)$ time.
\end{enumerate}
Here we generate multinomial vectors sequentially using binomial random variables. Each binomial can be simulated in expected time $O(1)$, as explained in \cite{MR2888318}. 
Therefore, the expected time spent in a \emph{single iteration} of the rejection loop is:
\begin{equation*}
    \esp{\text{Time per iteration}} = \Theta\paren{d^2 + \sum_{i,j} \tau_{i,j}(n_i)} + \proba{A^{(\bo n)}} \cdot \Theta(d^3).
\end{equation*}

Because the iterations are independent and we reject and repeat from Step 1 until $B^{(\bo n)}$ occurs, the expected number of attempts is $\esp{I} = 1/\proba{B^{(\bo n)}}$. By Wald's equation, the total expected time spent in the rejection loop is:
\begin{equation*}
\begin{split}
    \esp{T_{\text{loop}}} &= \frac{1}{\proba{B^{(\bo n)}}} \esp{\text{Time per iteration}} \\
    &= \frac{1}{\proba{B^{(\bo n)}}} \Theta\paren{d^2 + \sum_{i,j} \tau_{i,j}(n_i)} + \frac{\proba{A^{(\bo n)}}}{\proba{B^{(\bo n)}}} \Theta(d^3).
\end{split}
\end{equation*}

Let us compute the exact probability of $B^{(\bo n)}$. Consider any fixed multitype degree sequence $\bb s\in DS(\bo n, \bo r)$, where recall that $DS(\bo n, \bo r)$ is the set of all multitype degree sequences satisfying the terminal sum conditions of $A^{(\bo n)}$.
Note that $A^{(\bo n)}\cap \{\bb S = \bb s \}=\{\bb S = \bb s\}$. 
Observe that $B^{(\bo n)}$ occurs if $A^{(\bo n)}$ does and the degree sequence $\bb S$ passes the randomized acceptance check in Steps 9--11. 
Thus, the conditional probability of acceptance given $A^{(\bo n)}$ and a specific degree sequence $\bb S = \bb s$ is exactly $\det(-\bb k(\bb s)) / \prod n_i$. 
Here we put the dependency of $\bb k$ on the degree sequence $\bb s$. 
Thus, the unconditional probability is
\begin{equation*}
    \proba{B^{(\bo n)}} = \esp{ \bo 1\big\{A^{(\bo n)}\big\} \frac{\det(-\bb k(\bb S))}{\prod n_i} } = \sum_{\bb s \in DS(\bo n, \bo r)} \proba{\bb S = \bb s} \frac{\det(-\bb k(\bb s))}{\prod n_i}.
\end{equation*}

Recall that $\mathbb{B}_{\bb s, \bo r}$ is the set of all multidimensional paths of length $\bo n$ having exactly this degree sequence $\bb s$. 
Recall also that the size of this set is given by the product of multinomial coefficients $\# \mathbb{B}_{\bb s, \bo r}= \prod_{i,j} \binom{n_i}{n_{i,j}(0), n_{i,j}(1), \ldots}$. For any path $\bb x \in \mathbb{B}_{\bb s, \bo r}$, the probability that the unconditioned random walks $\bb X$ exactly match $\bb x$ is $\p(\bb X = \bb x) = \prod_{i,j} \prod_k \mu_{i,j}(k)^{n_{i,j}(k)}$. Hence, the multinomial probability of generating $\bb s$ can be identically written as the sum of its corresponding path probabilities $\proba{\bb S = \bb s} = \sum_{\bb x \in \mathbb{B}_{\bb s, \bo r}} \p(\bb X = \bb x)$. 
Substituting this back into the expectation yields:
\begin{equation*}
    \proba{B^{(\bo n)}} = \sum_{\bb s \in DS(\bo n, \bo r)} \sum_{\bb x \in \mathbb{B}_{\bb s, \bo r}} \p(\bb X = \bb x) \frac{\det(-\bb k(\bb x))}{\prod n_i} = \sum_{\bb x \in \mathbb{B}_{\bo n, \bo r}} \p(\bb X = \bb x) \frac{\det(-\bb k(\bb x))}{\prod n_i},
\end{equation*}
where $\mathbb{B}_{\bo n, \bo r}$ is the complete set of all multidimensional paths of length $\bo n$ matching the terminal sum conditions imposed by $A^{(\bo n)}$ (i.e., multidimensional bridges).
By Corollary \ref{coroRelatingFirstHittingTimeAndBridges}, the right-hand side is exactly $\p(\bo T_{\bo r} = \bo n)$.

By Theorem \ref{teoLawOfTheTotalPopulationByTypes} (Equation \ref{eqnOterDwassGeneralizationH2}), under Hypothesis \hyperref[descriptionCMGConvolution]{\bo{H2}}, we have exactly $\proba{B^{(\bo n)}} = 
\frac{\# \bo r}{\# \bo n} \proba{A^{(\bo n)}}$.

Substituting this ratio back into the expected time equation gives
\begin{equation*}
    \esp{T_{\text{loop}}} = \Theta\paren{ \frac{\# \bo n}{\# \bo r} \paren{ \frac{d^2 + \sum_{i,j}\tau_{i,j}(n_i)}{\prod_{j=1}^d \mu^{*\bo n}_j(-r_j)} + d^3} }.
\end{equation*}

Once event $B^{(\bo n)}$ occurs, the rejection loop terminates and Algorithm \ref{algorithmUMFGDS} is executed (Step 12) on the single accepted degree sequence. The computational costs for this final phase are:
\begin{enumerate}
    \item The child sequence and the $d^2$ uniform permutations $(\bm \pi_{i,j})$ are generated in $\Theta(d\sum n_i) = \Theta(\# \bo n)$ expected time.
    \item To compute the multidimensional Vervaat transform, we must find a good cyclical permutation among the $\det(-\bb k)$ valid ones. Because $X^{i,i}$ is the BFW of the subforest of type $i$, we know $n_i=\min\{\ell: X^{i,i}_\ell=\min_{0\leq k\leq n_i}X^{i,i}_k \}$. Thus, we only need to consider joint cyclical permutations that shift each coordinate to one of its first times that reach a new minimum. There are exactly $X^{i,i}_{n_i}$ such minimum for each type $i$. The number of candidate joint permutations is exactly $\prod_{i=1}^d (-X^{i,i}_{n_i})$. We test candidates from this restricted pool to find a valid forest (which takes $\Theta(\#\bo n)$ time per test, as proved in Lemma \ref{lemmaExpectedTimeToShowItIsAValidForest} below). The expected time for the Vervaat search is thus bounded by $\Theta\paren{ \#\bo n \, \E\bra{ \prod_{i=1}^d (-X^{i,i}_{n_i}) \;\middle|\; B^{(\bo n)} } }$.
\end{enumerate}

Summing the expected time of the rejection loop and the single execution of Algorithm \ref{algorithmUMFGDS} directly yields the upper bound for the overall complexity stated in Theorem \ref{teoIntroComplexityTime}.

To understand the asymptotic behavior, consider the case where the offspring distribution has finite variance. The condition $\sum_{\ell=1}^d X_{n_\ell}^{\ell,j} = -r_j$ forces $ X_{n_i}^{i,i}$ to scale on the order of $\Theta(\sqrt{n_i})$. Therefore, the conditional expectation of the product $\prod_{i=1}^d (-X^{i,i}_{n_i})$ scales as $\Theta((\# \bo n)^{d/2})$, making the Vervaat step take expected $\Theta((\# \bo n)^{d/2+1})$ time.
Furthermore, the maximum step size $\tau_{i,j}(n_i)$ scales as $o(\sqrt{n_i})$. The probability of $A^{(\bo n)}$ is $\Theta((\# \bo n)^{-d/2})$ by Lemma \ref{lemmaLLTTargetSums}. Thus, the expected time spent in the rejection loop is:
\begin{equation*}
    o\paren{ \frac{\# \bo n}{\# \bo r} \cdot \frac{\sqrt{\# \bo n}}{(\# \bo n)^{-d/2}} } = o\paren{ \frac{(\# \bo n)^{(d+3)/2}}{\# \bo r} }.
\end{equation*}

Since $\frac{d+3}{2} > \frac{d}{2} + 1$ for any $d \ge 1$, the generation of the multinomial degree sequences asymptotically dominates the runtime, bringing the total expected complexity to $o\paren{\frac{(\# \bo n)^{(d+3)/2}}{\# \bo r}}$.

This complexity highlights the great advantage of our method. A na\"ive approach (as calculated in Section \ref{subsectionComplexityNaive}) requires an expected time of $\Theta((\# \bo n)^{\max(d/2+2, \, d+1)}/\# \bo r)$ due to the exhaustive minimality checks on every valid multidimensional bridge. By testing the determinant condition \emph{before} applying the Vervaat transform and restricting the search space to the strict minima of the diagonal walks, Algorithm \ref{algorithm8} bypasses this severe bottleneck.

\section*{Appendix}

We prove here Lemma \ref{lemmaPairsYUMappedToW}, which is used in the proof of Theorem \ref{teoConstructionOFUniformMtypeForestWithGDS} to count the preimages of the multidimensional Vervaat transform.

Let $s, m \in \mathbb{N}$. Define the set of paths finishing at $-m$ at time $s$ as $\mathbb{B}_{s,m} = \left\{ y \in \mathbb{Z}^s : y(j)-y(j-1) \ge -1, y(s)=-m \right\}$.
For $i \in [s-1]_0$, the cyclic permutation $\theta_i(y)$ shifts the increments of $y$ cyclically by $i$ steps. For $u \in [m-1]_0$, let $\tau_u(y)$ be the first time $y$ hits $\min(y) + u$. The Vervaat-type transformation is defined as $V(y,u) = \theta_{\tau_u(y)}(y)$, which maps $\mathbb{B}_{s,m}$ to the set of excursions $\mathbb{E}_{s,m}$ (paths in $\mathbb{B}_{s,m}$ that hit $-m$ for the first time exactly at step $s$).
They are also called first-passage bridges. 

Given a degree sequence $\bo s = (n_i, i \ge 0)$ with $\#\bo s: = \sum n_i$ and $\sum i n_i = \#\bo s - m$, let $\mathbb{B}_{\bo s}$ and $\mathbb{E}_{\bo s}$ denote the restrictions of $\mathbb{B}_{\#\bo s,m}$ and $\mathbb{E}_{\#\bo s,m}$ to paths whose increment counts exactly match $\bo s$.

The following lemma is similar to a classical combinatorial lemma about ladder indices and cyclical permutations (cf.\ \cite[XII.6, Lemma 1]{MR0270403}), and is inspired by Lemma 2.2 of \cite{MR4003553}.

\begin{lemma}\label{lemmaPairsYUMappedToW}
    Let $w\in \mathbb{E}_{\bo{s}}$. Then, the number of different pairs $(y, u)\in \mathbb{B}_{\bo{s}}\times [m-1]_0$ such that $V(y,u)=w$ is exactly $\#\bo s$. 
\end{lemma}
\begin{proof}
    If $V(y, u) = w$, then $y$ must be a cyclic permutation of $w$. Thus, any such $y$ is of the form $y_i = \theta_i(w)$ for some $i \in [\#\bo s-1]_0$. We must find for which $u \in [m-1]_0$ we have $V(y_i, u) = w$.
    
    Since $w \in \mathbb{E}_{\bo S}$, $w$ reaches $-m$ for the first time at step $\#\bo s$. For $y_i = \theta_i(w)$, its values are given by $y_i(j) = w(j+i) - w(i)$ for $j \le \#\bo s-i$. Thus $y_i(\#\bo s-i) = w(\#\bo s) - w(i) = -m - w(i)$. 
    Because $w(j) > -m$ for all $j < \#\bo s$, $y_i$ cannot reach $-m - w(i)$ strictly before step $\#\bo s-i$.
    The overall minimum of $y_i$ over the full path of length $\#\bo s$ is given by $\min(y_i) = \min_{k \le i} w(k) - m - w(i)$.
    
    In order for the Vervaat transform to shift $y_i$ exactly at time $\#\bo ss-i$ to recover $w$, we need $\#\bo ss-i$ to be the \emph{first} time $y_i$ hits $\min(y_i) + u$. 
    Therefore, we must set $u = y_i(\#\bo s-i) - \min(y_i) = (-m - w(i)) - (\min_{k \le i} w(k) - m - w(i)) = -\min_{k \le i} w(k)$.
    Because $w \in \mathbb{E}_{\bo s}$, its running minimum before step $s$ is between $-m+1$ and $0$, ensuring $u \in [0, m-1]$.
    
    This shows that for each $i \in [\#\bo ss-1]_0$, the pair $(y_i, u_i)$ with $y_i = \theta_i(w)$ and $u_i = -\min_{k \le i} w(k)$ maps to $w$. 
    Let us prove that all $\#\bo s$ pairs are distinct. If $(y_i, u_i) = (y_j, u_j)$ for $i < j$, then $w$ must be periodic, meaning $w(k + l) = w(k) + w(l)$ for $l = j-i$. But because $w(\#\bo s) = -m < 0$, $w(l)$ must be strictly negative. Consequently, the running minimum $\min_{k \le j} w(k)$ will be strictly smaller than $\min_{k \le i} w(k)$, implying $u_i \ne u_j$. Thus, all $\#\bo s$ pairs are distinct, and the number of preimages is exactly $\#\bo s$.
\end{proof}

The following result is needed to finish the Proof of Theorem \ref{teoLawOfTheTotalPopulationByTypes}. 

\begin{lemma}\label{lemmaDeterminantUnderHypothesis}
	\[
	\frac{1}{\prod n_i}\sum_{\mathfrak{e}\in \bb F_d^{\text{elem.}}}\prod_{j=1}^{d}\widetilde{k}_{\mathfrak{e}_j,j}=\frac{r_1+\cdots +r_d}{n_1+\cdots +n_d}.
	\]
\end{lemma}
\begin{proof}
Recall that $\#\bo n=\sum n_i$ and $\#\bo r=\sum r_i$. 
Define the matrix $\overline{\bb k}$ as a $d\times d$ matrix with entries $\overline{k}_{i,j}=\#\bo n\widetilde{k}_{i,j}=n_i(n_j-r_j)$ for $i\neq j$, and diagonal
	\begin{equation*}
		-\overline{k}_{j,j}=\#\bo nr_j+\sum_{i\neq j}\#\bo n\widetilde{k}_{i,j}=\#\bo nr_j+\sum_{i\neq j}n_i(n_j-r_j)=\#\bo nr_j+(n_j-r_j)(\#\bo n-n_j)=n_j(\#\bo n-n_j+r_j).
	\end{equation*}Then, using Lemma \ref{lemmaNumberOfGoodCyclicalPerm}, we have
	\begin{equation*}
		\sum_{\mathfrak{e}\in \bb F_d^{\text{elem.}}}\prod_{j=1}^{d}\widetilde{k}_{\mathfrak{e}_j,j}=\#\bo n^{-d}det(-\overline{\bb k}).
	\end{equation*}To prove that $det(-\overline{\bb k})=\#\bo n^d\frac{\#\bo r}{\#\bo n}\prod n_i $, factorize in row $i$ the factor $n_i$, obtaining
	\begin{equation*}
		det(-\overline{\bb k}) = \prod n_i
		\begin{vmatrix}
			\#\bo n-n_1+r_1& -(n_2-r_2) & \cdots & -(n_d-r_d) \\
			\vdots  & \vdots  & \ddots & \vdots  \\
			-(n_1-r_1) & -(n_2-r_2) & \cdots & 	\#\bo n-n_d+r_d
		\end{vmatrix}.
	\end{equation*}Multiply the last row by minus one, and add it to every other row, to obtain
	\begin{equation*}
		det(-\overline{\bb k}) = \prod n_i
		\begin{vmatrix}
			\#\bo n& 0 & \cdots & -\#\bo n \\
			0& \#\bo n & \cdots & -\#\bo n \\
			\vdots  & \vdots  & \ddots & \vdots  \\
			-(n_1-r_1) & -(n_2-r_2) & \cdots & 	\#\bo n-n_d+r_d
		\end{vmatrix}.
	\end{equation*}Multiply by $(n_i-r_i)/\#\bo n$ each row $i\in [d-1]$, and add it to the last row
	\begin{equation*}
		det(-\overline{\bb k}) = \prod n_i
		\begin{vmatrix}
			\#\bo n& 0 & \cdots & -\#\bo n \\
			0& \#\bo n & \cdots & -\#\bo n \\
			\vdots  & \vdots  & \ddots & \vdots  \\
			0 & 0 & \cdots & 	\#\bo n-\sum_1^d(n_i-r_i)
		\end{vmatrix},
	\end{equation*}and, being an upper triangular, it follows that $det(-\overline{\bb k}) = \#\bo n^{d-1}\#\bo r\prod n_i=\#\bo n^d\frac{\#\bo r}{\#\bo n}\prod n_i$ as wanted.
\end{proof}

\begin{lemma}\label{lemmaExpectedTimeToShowItIsAValidForest}
Given a bridge $\bb x$ in $S_d$ with length $\bo n$, the time to validate if it codes a forest is $\Theta(\#\bo n)$.
\end{lemma}
\begin{proof}
To verify whether a candidate cyclical permutation $\bb x$ properly codes a valid multitype forest, we do not need to na\"ively evaluate the multidimensional minimal solution condition $\bo r + \bo 1 \cdot \bb x_{\bo m} > \bo 0$ across all $\prod_{i=1}^d n_i$ grid points $\bo m < \bo n$. Instead, we can structurally validate the candidate natively in $\Theta(\# \bo n)$ time by sequentially simulating the breadth-first exploration process of the subforests (cf. Section 2.2 of \cite{MR3449255}). 

The validation algorithm proceeds sequentially as follows:
\begin{enumerate}
    \item Initialize $d$ active counters (representing the queues of 'Active' unexplored vertices of each type) with the initial roots: $\bo a=\bo r$. Initialize the number of 'Explored' vertices $\bo e=\bo 0$.
    \item Loop at most $\#\bo n = \sum_{i=1}^d n_i$ times:
    \begin{itemize}
        \item Find type $i^*:=\min\{i \in [d]: e_i<n_i,a_i>0\}$. If no such type exists, but there are still unprocessed vertices overall (i.e., $\sum_{i=1}^d e_i < \#\bo n$), a subforest died prematurely. In this case, we stop and \textbf{reject} the candidate.
        \item Pick the active vertex, say $v_{i^*}$, of type $i^*$ with smallest label. 
Update the status of this vertex, moving if from active to explored: increment $e_{i^*} \gets e_{i^*} + 1$, and decrement $a_{i^*} \gets a_{i^*} - 1$. 
Add all children of $v_{i^*}$ to the active vertices: increment $\bo a \gets \bo a+(\kappa_1(v_{i^*}),\ldots, \kappa_d(v_{i^*}))$. 
    \end{itemize}
    \item The candidate path is a valid multidimensional breadth-first walk if and only if the exploration completes exactly $\#\bo n$ steps (i.e., it terminates exactly when $\bo e=\bo n$).
\end{enumerate}
\end{proof}

\bibliography{OsBib}
\bibliographystyle{amsalpha}
\end{document}